\DeclareFontFamily{U}{ntxmia}{\skewchar \font =127}
 \DeclareFontShape{U}{ntxmia}{m}{it}{
                        <-> \ntxmath@scaled ntxmia
                      }{}    
                      \DeclareFontShape{U}{ntxmia}{b}{it}{
                        <-> \ntxmath@scaled ntxbmia
                      }{}
\def\NAT@spacechar{~}
\newcommand{\urlprefix}{}
\crefname{figure}{figure}{figures}
\crefname{claim}{Claim}{Claims}
\crefname{figure}{Figure}{Figures}
\crefname{claim}{claim}{claims}
\Crefname{figure}{Figure}{Figures}
\Crefname{claim}{Claim}{Claims}
\theoremstyle{plain}
\newtheorem{definition}{Definition}
\newtheorem{theorem}[definition]{Theorem}
\newtheorem{corollary}[definition]{Corollary}
\newtheorem{lemma}[definition]{Lemma}
\newtheorem{conjecture}[definition]{Conjecture}
\numberwithin{equation}{section}
\renewcommand{\binom}[2]{\ensuremath{\mleft(\kern-.1em\genfrac{}{}{0pt}{}{#1}{#2}\kern-.1em\mright)}}    
\newcommand{\inbinom}[2]{\ensuremath{\bigl(\kern-.1em\genfrac{}{}{0pt}{}{#1}{#2}\kern-.1em\bigr)}} 
\DeclareMathOperator\supp{supp}
\newcommand{\PP}{\mathbb{P}}
\newcommand{\EE}{\mathbb{E}}
\newcommand{\lgr}{\left<g\right>}
\newcommand{\tk}{\tilde{k}}
\newcommand{\tm}{\tilde{m}}
\newcommand{\tn}{\tilde{n}}
\newcommand{\ty}{\tilde{y}}
\newcommand{\tY}{\tilde{Y}}
\newcommand{\NN}{\mathbb{N}}
\def\moverlay{\mathpalette\mov@rlay}
\def\mov@rlay#1#2{\leavevmode\vtop{%
  \baselineskip\z@skip \lineskiplimit-\maxdimen
  \ialign{\hfil$\m@th#1##$\hfil\cr#2\crcr}}}
\newcommand{\charfusion}[3][\mathord]{
    #1{\ifx#1\mathop\vphantom{#2}\fi
        \mathpalette\mov@rlay{#2\cr#3}
      }
    \ifx#1\mathop\expandafter\displaylimits\fi}
\newcommand{\COMMENT}[1]{}
\renewcommand{\COMMENT}[1]{\footnote{\textcolor{blue!70!black}{#1}}} 
\newcommand{\COMNEW}[1]{}
\renewcommand{\COMNEW}[1]{\footnote{\textcolor{red!70!black}{#1}}} 
\title{Further remarks on fractional vs. expectation thresholds}
\author[T.~Fischer]{Thomas Fischer}
\author[Y.~Person]{Yury Person}
\address{Institut f\"ur Mathematik, Technische Universit\"at Ilmenau, 98684 Ilmenau, Germany} 
\email{thomas.fischer \,|\, yury.person@tu-ilmenau.de}
\thanks{Research is supported by DFG grant 447645533.}
\date{\today}
\begin{document}

\begin{abstract}
A conjecture of Talagrand (2010) states that the so-called expectation and fractional expectation thresholds are always within at most some constant factor from each other. In this note we generalize a method of DeMarco and Kahn in~\cite{TalagrandCliques} and settle a few more special cases.
\end{abstract}

\maketitle

\section{Introduction}
Let $X$ be a finite nonempty set, $k\in\NN$ some positive integer with $k\leq |X|$ and $p\in[0,1]$. We denote by $2^X$ the power set of $X$ and by $\binom{X}{k}$ the set of all $k$-element subsets of $X$. Given a function $g\colon 2^X\to[0,1]$, we will be interested in a subset $\left<g\right>$ of $2^X$ defined as follows:
\begin{equation}\label{eq:family_g}
\left< g\right>:=\left\{S\in 2^X \left|\  \sum_{T\subseteq S} g(T) \geq 1 \right. \right\},
\end{equation}
and we define the weight of $g$ with respect to $p$ as
\[
w(g,p):=\sum_{S\in 2^X} g(S)p^{|S|}.
\]
If $g$ takes only values from $\{0,1\}$ then we can identify $g$ with an indicator function $\mathds{1}_G$ for some $G\subseteq 2^X$, so that we write $\left<G\right>$ instead of  $\left<\mathds{1}_G\right>$ and $\left<G\right>$ thus denotes the  upset of $G$. Moreover, $w(\mathds{1}_G,p)$ will be denoted simply by $w(G,p)=\sum_{S\in G} p^{|S|}$. This latter term $w(G,p)$ has an interpretation as the expected number of sets from $G$ which are contained in a binomial random set $X_p$. The set $X_p$ is  formed by including every $x\in X$ into $X_p$ independently with probability $p$. The quantity $w(g,p)$ may then be viewed as a fractional analogue of the quantity $w(G,p)$. A fascinating conjecture of M.~Talagrand~\cite{TalagrandOriginal} states, in an equivalent version from~\cite{FP23}, the following.
\begin{conjecture}[Conjecture~6 from~\cite{FP23}]\label{conj:Talagrand}
    There exists some fixed $L>1$ such that for all $n\in\NN$, $g: 2^X \rightarrow [0,1]$ with $g(\emptyset)=0$ and $p\in [0,1]$ the following holds. 
 If     $w(g,p) = 1$ then there exists a set $G\subseteq 2^X\setminus\{\emptyset\}$ with $\left<g\right>\subseteq \left<G\right>$ and $w\left(G,\frac{p}{L}\right) \leq 1$.   
\end{conjecture}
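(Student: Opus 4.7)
The plan is to construct $G$ by a level-wise thresholding of $g$. Write $g=\sum_{k\geq 1}g_k$, where $g_k$ is the restriction of $g$ to $\binom{X}{k}$, and set $m_k:=w(g_k,p)=p^k\sum_{|T|=k}g(T)$, so that $\sum_{k\geq 1}m_k=w(g,p)=1$. For thresholds $\alpha_k\in(0,1]$ to be chosen later, define
\[
G\ :=\ \bigcup_{k\geq 1}\bigl\{T\in\binom{X}{k}:\ g(T)\geq \alpha_k\bigr\}.
\]
The weight side is immediate: the number of chosen $k$-sets is at most $m_k/(\alpha_k p^k)$, hence
\[
w(G,p/L)\ \leq\ \sum_{k\geq 1}L^{-k}\,\frac{m_k}{\alpha_k}.
\]
Choosing, say, $\alpha_k=L^{-k/2}$ makes the right-hand side at most $L^{-1/2}\sum_k m_k=L^{-1/2}\leq 1$ for $L\geq 1$, regardless of how $g$ distributes its mass across levels.

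The substance is then the cover condition $\left<g\right>\subseteq\left<G\right>$: whenever $\sum_{T\subseteq S}g(T)\geq 1$ we must find $T\subseteq S$ with $g(T)\geq \alpha_{|T|}$. Following the DeMarco--Kahn philosophy from~\cite{TalagrandCliques} I would proceed by a two-case analysis on such $S$. In the \emph{concentrated} case, where the sum is dominated by a single level $k^\ast$, the problem reduces to a discrete covering question on the weighted $k^\ast$-uniform hypergraph $\supp(g)\cap\binom{X}{k^\ast}$; one then extracts a cover of the correct weight by a peeling of sunflower-like substructures, at each step either placing a small kernel into $G$ directly, or passing to a strictly smaller instance. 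In the \emph{spread} case, where many levels contribute to the inner sum, one runs the covering argument at each level separately and takes the union of the level-wise covers, exploiting the geometric $L^{-k}$ factor in $w(G,p/L)$ to absorb a constant per-level loss.

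The main obstacle is the concentrated subproblem: producing a discrete cover from a fractional weighting on an arbitrary $k^\ast$-uniform hypergraph. DeMarco--Kahn resolve this for cliques by exploiting that the clique template is preserved under vertex-transitive automorphisms, which guarantees that the sunflower step always finds a useful kernel, and that the weight of an extracted kernel can be charged against the fractional mass it removes. For a general $g$ the support $\supp(g)\cap\binom{X}{k^\ast}$ carries no such symmetry, and the peeling need not terminate with a cover of admissible weight; indeed a fully general reduction of this kind would resolve Conjecture~\ref{conj:Talagrand} outright. The realistic contribution is therefore to identify structural hypotheses on $g$ or on $\supp(g)$ under which the DeMarco--Kahn reduction still applies, thereby extending the list of known special cases rather than settling the conjecture in full generality.
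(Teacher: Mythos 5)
This statement is a conjecture, not a theorem: the paper does not prove it, and indeed the whole point of the paper is to establish a few further \emph{special cases} (single-uniformity constant $g$, random-union construction, hypergraph cliques). So no complete proof exists to compare against; I can only assess whether your proposed strategy matches, or could reproduce, the special cases the paper actually handles. It does not.

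The level-wise thresholding construction $G:=\bigcup_k\{T:\ g(T)\ge\alpha_k\}$ with $\alpha_k=L^{-k/2}$ fails the cover condition already in the paper's benchmark case. Take $g$ constant equal to $\frac1r$ on a single uniformity $k$ (the situation to which Conjecture~\ref{conj:Talagrand_rest} reduces the problem). If $r>L^{k/2}$ then $g(T)=\frac1r<\alpha_k$ for every $T$ in $\supp(g)$, so $G=\emptyset$ and $\left<g\right>\subseteq\left<G\right>$ is false whenever $\left<g\right>\neq\emptyset$. The interesting regime is $r>L^k$ (for $r\le L^k$ one simply takes $G=\supp(g)$, as noted after Conjecture~\ref{conj:Talagrand_rest}), and no choice of $\alpha_k\in(0,1]$ can threshold a constant function to anything other than all of $\supp(g)$ or $\emptyset$. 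Thresholding is therefore structurally blind to exactly the sets one needs in $G$: the useful $G$ in this regime consists of sets strictly \emph{larger} than $k$, obtained as unions of several support edges, which your construction never produces.

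Your account of the DeMarco--Kahn mechanism is also materially inaccurate, which is why the ``concentrated case'' sketch does not connect to anything that works. There is no sunflower peeling, no appeal to vertex-transitive automorphisms, and no charging argument of the kind you describe. What DeMarco--Kahn do, and what this paper abstracts in Definition~\ref{def:general_as} and Theorems~\ref{thm:DMK_one}--\ref{thm:DMK_two}, is: fix $s,t$; draw $e_{i,j}\in\supp(g)$ uniformly at random; set $e_i:=\bigcup_{j\le s}e_{i,j}$ and $G:=\{e_i:i\in[t]\}$; use Lemma~\ref{lem:find_t} (a union bound over the at most $\binom{d}{r}$ minimal elements of $\left<g\right>$) to get $\PP(\left<g\right>\subseteq\left<G\right>)\ge\frac12$; and bound $\EE[w(G,p/L)]$ through the overlap random variables $Y_j=|e_{1,j}\cap\bigcup_{j'<j}e_{1,j'}|$, since $|e_1|=sk-\sum_jY_j$. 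The structural hypothesis that makes this tractable for cliques is a codegree/overlap estimate (the hypergeometric tail in Lemma~\ref{lem:DMK_one} and the concavity argument in Lemma~\ref{lem:DMK_two}), not any symmetry of the template. If you want to extend the list of special cases, the lever to pull is conditions~\eqref{eq:thm_one}, \eqref{eq:Y_cond} or~\eqref{eq:codegree_Y} on the overlap distribution, not a thresholding of $g$.

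Finally, the reduction to a single level is not something your $g=\sum_k g_k$ decomposition needs to re-derive or can handle by ``running the covering argument at each level separately'': the paper imports from~\cite{FP23} that one may assume $\supp(g)\subseteq\binom{X}{k}$ for some $k\le\log_L|X|$ and moreover that $g$ is constant there. Your spread/concentrated dichotomy is superfluous once that reduction is in hand, and without it the per-level covers you propose have no mechanism to interact with the inner sum $\sum_{T\subseteq S}g(T)\ge1$ when no single level dominates.
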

The quantities $w(g,p)$, resp.\ $w(G,p)$, are related to the so-called fractional expectation resp.\ expectation thresholds. 
For more background on expectation thresholds and Talagrand's conjecture we refer the reader to~\cite{FP23,TalagrandTwoSets,TalagrandOriginal}. 
 
Talagrand~\cite{TalagrandOriginal} proved Conjecture~\ref{conj:Talagrand} for functions $g$ whose support $\supp(g):=\{S\colon g(S)\neq 0\}$ is contained in $\binom{X}{1}$ and also for functions $g$ so that, for some set $J\subseteq X$, all sets $S$ from $\lgr$ contain at least $(2e)p|J|$ elements from $J$ (see~\cite[Lemma~5.9]{TalagrandOriginal}). 
Talagrand~\cite{TalagrandOriginal} also suggested two further special cases as test cases: when $g$ is constant and supported by the edge sets of the cliques of some order $k$ in the complete graph $K_n$ and when $g$ is supported by a subset of $2$-sets in $X$ (i.e.\ $\supp(g)\subseteq \binom{X}{2}$). The former case was verified by DeMarco and Kahn in~\cite{TalagrandCliques} for graph cliques of any order $k$ and the latter was verified by Frankston, Kahn and Park in~\cite{TalagrandTwoSets}. We generalized in~\cite{FP23} the result from~\cite{TalagrandTwoSets} to `almost' linear hypergraphs of any uniformity, and we also proved that it is enough to study functions $g\colon \binom{X}{k}\to[0,1]$ for all $k\le \ln(|X|)$. 

More recently, Dubroff, Kahn and Park~\cite{DKP24} and Pham~\cite{Pham24} proved that Conjecture~\ref{conj:Talagrand} is true for any function $g\colon 2^X\to[0,1]$ which is supported on sets of constant size. Moreover, from the quantitative result of Pham~\cite{Pham24}, combined with our result~\cite[Theorem~10]{FP23}, it follows that $L$ can be chosen as $O(\log\log|X|)$, which is better than the $O(\log|X|)$ bound implied by the resolution of the Kahn-Kalai conjecture~\cite{KK07} by Park and Pham~\cite{PP24}.

\subsection{An abstract setting for $k$-uniform hypergraphs}
The purpose of this paper is to investigate a few more cases when sets from the support of $g$ are not necessarily of constant cardinality. From our work in~\cite[Theorems~8 and~10]{FP23} it follows that it is sufficient to study the following conjecture whose resolution would imply Conjecture~\ref{conj:Talagrand}. 

\begin{conjecture}\label{conj:Talagrand_rest}
    There exists some fixed $L>1$ such that for all finite sets $X$ and all $k\in\NN$ with $k\leq \log_L\left(\left|X\right|\right)$ and all $g: \binom{X}{k} \rightarrow [0,1]$ with $p\in [0,1]$ the following holds:
    
    If $w(g,p) = 1$ then there exists a set $G\subseteq 2^X\setminus\{\emptyset\}$ with $\left<g\right>\subseteq \left<G\right>$ and $w\left(G,\frac{p}{L}\right) \leq 1$.   
\end{conjecture}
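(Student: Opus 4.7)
The overall strategy is to normalise $g$ into a probability distribution: set $\mu(T):=g(T)p^k$ on $\binom{X}{k}$, so that $\sum_T \mu(T)=w(g,p)=1$, and observe that membership in $\lgr$ translates into the condition that $S$ contain a collection of $k$-subsets of total $\mu$-mass at least $p^k$. The problem then reduces to constructing a family $G\subseteq 2^X\setminus\{\emptyset\}$ which covers $\lgr$ and whose total $(p/L)$-weight is at most $1$.

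The plan is to combine two ingredients. First, I would stratify $\mu$ by local concentration: for each $i\in\{1,\dots,k-1\}$ and each $i$-set $R\subseteq X$, inspect the conditional mass $\mu(\{T:R\subseteq T\})$ and peel off the ``heavy stars'' where this exceeds a threshold of order $p^i$; each such $R$ is placed into $G$ at cost $(p/L)^i$, and the global cost is paid for by the witnessing masses. Second, on the residual ``spread'' part of $\mu$, apply a Park--Pham style random sampling at density $p/L$: a bounded number of independent samples should, with positive probability, dominate the residual $\lgr$ once no heavy $i$-star survives the peeling. Taking the union of the star-witnesses and one good realisation of the random family then yields the desired $G$.

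The main obstacle is the quantitative loss in the spread step. The Park--Pham machinery behind~\cite{PP24} loses a factor of $\log|X|$, and Pham's refinement~\cite{Pham24} only improves this to $\log\log|X|$, while Conjecture~\ref{conj:Talagrand_rest} demands a \emph{constant} $L$. To remove the last logarithm one needs to iterate the peeling in the spirit of the DeMarco--Kahn argument for cliques~\cite{TalagrandCliques}: at every iteration one partitions off a subfamily whose spread defect is absorbed by a single bounded-weight ``booster'' collection, and the hypothesis $k\le\log_L|X|$ caps the number of iterations by $\log_L|X|$. The hard step is formulating a notion of booster that is general enough to be extractable from an arbitrary $k$-uniform $\mu$ (rather than the very rigid clique case of~\cite{TalagrandCliques} or the $2$-uniform case of~\cite{TalagrandTwoSets}), yet structured enough that the booster weights can be charged globally to the original $\mu$ without accumulating any $k$-dependent loss. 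Producing such a booster uniformly across the range $k\le \log_L|X|$ is, to my knowledge, precisely where existing techniques stall, and is the point that any proof of Conjecture~\ref{conj:Talagrand_rest} will have to overcome.
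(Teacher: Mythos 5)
The statement you were asked to prove is labelled \emph{Conjecture} in the paper, and the paper does not prove it --- nor does it claim to. Its role there is purely organisational: by \cite[Theorems~8 and~10]{FP23} it would imply Conjecture~\ref{conj:Talagrand}, and the paper then goes on to prove only \emph{sufficient conditions} (Theorems~\ref{thm:DMK_one} and~\ref{thm:DMK_two}) under extra structural hypotheses on the overlap random variables $Y_j$ associated with random unions of edges, and verifies those conditions in one concrete family of instances (cliques in $\ell$-uniform hypergraphs, Section~\ref{sec:cliques_hyp}). So there is no ``paper's own proof'' to compare against, and your assessment that the conjecture remains open is correct.

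Your sketch is a reasonable high-level description of the state of the art, and the closing paragraph accurately locates the bottleneck: Park--Pham loses $\log|X|$, Pham's refinement still loses $\log\log|X|$, and a constant-$L$ statement requires a booster/peeling mechanism that so far only works in rigid cases. Where your sketch diverges from what the paper actually does is in its generality. The paper does not attempt a general heavy-star peeling on arbitrary $\mu$; instead it restricts to $g$ constant on its support, builds $G$ as $t$ independent unions of $s$ uniformly random edges, and reduces everything to controlling the conditional tails of $Y_j=|e_{1,j}\cap\bigcup_{j'<j}e_{1,j'}|$ (inequalities~\eqref{eq:thm_one},~\eqref{eq:Y_cond},~\eqref{eq:codegree_Y}). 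This is precisely the DeMarco--Kahn booster made abstract, but it is a \emph{conditional} result, not a proof of the conjecture: whether those tail conditions hold depends entirely on the structure of $\supp(g)$, and for a general $k$-uniform $g$ they can fail. Your proposal, to be turned into a proof, would need to supply exactly what the paper does not: a decomposition or peeling that certifies the analogue of~\eqref{eq:Y_cond} (or an alternative spread-type hypothesis with no $k$-dependent loss) for an arbitrary weighted $k$-uniform hypergraph. That step is missing from both your sketch and the paper, which is why the statement is, appropriately, still a conjecture.
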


It will be useful to think of the function $g$ as corresponding to a weighted $k$-uniform hypergraph on $X$ where the weighted edges correspond to the sets from the support $\supp(g)$.
The challenge with Conjecture~\ref{conj:Talagrand} (and also with Conjecture~\ref{conj:Talagrand_rest}) is to  construct a $G$ out of $g$. Our main focus in this paper will be a generalization of the approach pioneered by DeMarco and Kahn~\cite{TalagrandCliques} for the case of cliques in graphs. 

We will seek to formulate abstract conditions which are especially satisfied by the hypergraphs whose edges correspond to the edge sets of hypergraph cliques of some order.  
First we provide a general setup which we are going to deal with. 
\begin{definition}[\emph{General assumptions}]\label{def:general_as}
Let $n$, $k\in\NN$ with $\ln n\ge k\ge 2$. Let $X$ be a finite $n$-element set. Let $L>1$, $r\ge L^k$ and $d\in \NN$ with $r\le d$.  Let $g\colon \binom{X}{k}\to[0,1]$ be given and $p\in[0,1]$ such that $w(g,p)=1$ and $g$ be constant and equal $\frac{1}{r}$ on its support $E:=\supp(g)$. 

Now we construct G randomly by choosing $s,t\in\mathbb{N}$ and, for $i\in[t]$, $j\in [s]$, picking independently and uniformly at random sets $e_{i,j}\in E$ (with replacement). Next we define the sets $e_i:=\bigcup_{j\in[s]} e_{i,j}$ for each $i\in [t]$ and finally the (random) set $G:=\left\{e_i \mid i\in[t]\right\}$. Additionally we define for each $j\in[s]$ the random variable $Y_j:=\left|e_{1,j}\cap \bigcup_{j'<j} e_{1,j'}\right|$. 
\end{definition}

The following two technical theorems provide sufficient conditions on $(Y_j)_{j\in [s]}$ to infer 
the existence of $G\subseteq 2^X\setminus\{\emptyset\}$ with $\left<g\right>\subseteq \left<G\right>$ and $w\left(G,\frac{p}{L}\right)\leq 1$.

\begin{theorem}\label{thm:DMK_one}
Assume general assumptions (from Definition~\ref{def:general_as}). Let, additionally, $L\geq 2\cdot e$, $t=p^{-s\cdot k}\cdot n$ and $s\cdot k\geq \ln(n)$ hold. If we have 
\begin{equation}\label{eq:thm_one}
    \mathbb{P}\left(\sum_{j\leq s} Y_j= m\right) \left(\frac{d\cdot e^k}{r}\right)^{\frac{m}{k}} \leq \left(\frac{L}{2e}\right)^{\left(s-1\right)\cdot k-m}
\end{equation}
     for all $m\in[(s-1)\cdot k]$ then there exists $G\subseteq 2^X\setminus\{\emptyset\}$ with $\left<g\right>\subseteq \left<G\right>$ and $w\left(G,\frac{p}{L}\right)\leq 1$.\\
In particular, inequality~\eqref{eq:thm_one} is implied if, for all $y\in[k]$, the following holds
\begin{equation}\label{eq:Y_cond}
\mathbb{P}\left(Y_s \geq y\mid Y_1,\ldots,Y_{s-1}\right) \left(\frac{d\cdot 2^k\cdot e^k}{r}\right)^{\frac{y}{k}} \le \left(\frac{L}{4e}\right)^{k-y}.
\end{equation}
\end{theorem}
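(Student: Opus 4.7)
The strategy is the probabilistic method. I sample $G$ as in Definition~\ref{def:general_as} and aim to show that with positive probability both $\lgr \subseteq \lGr$ and $w(G, p/L) \le 1$ hold, so that a deterministic such $G$ exists; no deterministic patching should be needed because coverage already holds with high probability. For coverage, a minimal $S \in \lgr$ contains at least $r$ edges of $E$, so $\PP(e_{i,j} \subseteq S) \ge r/|E| = p^k$; here I use the identity $|E| p^k = r$ which follows from $w(g,p) = 1$ and $g \equiv 1/r$ on $E$. Thus $\PP(e_i \subseteq S) \ge p^{sk}$, and the choice $t = n p^{-sk}$ gives $\PP(S \notin \lGr) \le (1 - p^{sk})^t \le e^{-n}$. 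A union bound over the at most $2^n$ minimal elements of $\lgr$ yields $\PP(\lgr \not\subseteq \lGr) \le (2/e)^n \le 1/4$ (using $n \ge e^k \ge e^2$ from $\ln n \ge k \ge 2$).

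For the weight, decompose $|e_1| = sk - Y$ with $Y = \sum_{j \le s} Y_j$. Linearity of expectation and the choice of $t$ give
\[
\EE[w(G, p/L)] = \frac{n}{L^{sk}} \sum_{m=0}^{(s-1)k} \PP(Y=m)\left(\frac{L}{p}\right)^m.
\]
I would plug in hypothesis~\eqref{eq:thm_one} term by term and substitute $p^{-m} = (|E|/r)^{m/k}$ (from $p^k = r/|E|$). Under the interpretation that $d$ bounds $|E|$ from above (which I expect to be implicit in the intended $k$-clique style applications), the $m$-th summand then reduces to at most $(L/e)^m (L/(2e))^{(s-1)k-m}$. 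This is a geometric series of ratio $2$ summing to $O((L/e)^{(s-1)k})$; combined with the prefactor $n/L^{sk}$ and the hypotheses $sk \ge \ln n$, $L \ge 2e$, $k \ge 2$, one obtains $\EE[w(G, p/L)] \le 2(e/L)^k \le 1/2$. A final union bound over the two failure events furnishes a deterministic $G$ with the required properties.

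For the \emph{In particular} clause, I would iterate~\eqref{eq:Y_cond}: conditioning successively on $Y_1, \ldots, Y_{s-1}$, the joint probability $\PP(Y_1 = m_1, \ldots, Y_s = m_s)$ factorises into conditional tails, each (when $m_j \ge 1$) bounded by $(r/(d \, 2^k e^k))^{m_j/k}(L/(4e))^{k-m_j}$. Summing over compositions of $m$ into at most $s$ parts in $\{0, 1, \ldots, k\}$ produces a combinatorial count of order at most $2^{s+m}$, which the extra $2^k$ in the hypothesis was designed to absorb, yielding~\eqref{eq:thm_one}. The main obstacle throughout is the algebraic bookkeeping in the weight bound: Definition~\ref{def:general_as} literally only requires $r \le d$, so pinning down what extra structure on $d$ (say $d \ge |E|$, or a suitable degree/codegree bound that relates to $p$) is being tacitly used to tame the $(L/p)^m$ factors is the crux of the argument.
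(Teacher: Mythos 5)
Your approach is essentially the paper's.  A few remarks to tie up the loose ends you flag.

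First, your uncertainty about $d$: the paper does set $d:=|E|=|\supp(g)|$ in Section~2 (just before Definition~\ref{def:general_as}), so $p^k=r/d$ holds with equality; your weaker assumption $d\ge|E|$ is also sufficient and is all that is actually used. Second, for combining the two events, the paper does not take a union bound with Markov; it instead uses the conditional-expectation inequality~\eqref{eq:cond_Exp}, namely $\EE[w(G,p/L)\mid\lgr\subseteq\lGr]\le \EE[w(G,p/L)]/\PP(\lgr\subseteq\lGr)\le 2\cdot\tfrac12=1$, which directly produces a $G$ with both properties. Your union bound ($\PP(\lgr\not\subseteq\lGr)\le 1/4$ plus $\PP(w>1)\le 1/2$ by Markov) works just as well. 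The weight calculation you give is the same as the paper's up to rearrangement: the paper pulls out $(e/L)^k$ and telescopes, landing on $\tfrac12\sum_m 2^{m-(s-1)k}\le 1$, while you sum a geometric series to $2(e/L)^k\le\tfrac12$; both are the same computation.

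Where your sketch is thinner is the \emph{In particular} clause. Your combinatorial count of order $2^{s+m}$, absorbed only by the $2^{-m}$ coming from the extra $2^k$, leaves a stray $2^s$ that does not close on its own. The paper's accounting is cruder but cleaner: it applies~\eqref{eq:Y_cond} to \emph{every} $y_j$ (including $y_j=0$, which silently needs $L\ge 4e$), getting each term equal to $(L/(4e))^{(s-1)k-m}\cdot 2^{-m}=(L/(2e))^{(s-1)k-m}\cdot 2^{-(s-1)k}$, and then bounds the number of tuples $(y_2,\dots,y_s)\in\{0,\dots,k\}^{s-1}$ with $\sum y_j=m$ by the full $2^{(s-1)k}$, which the $2^{-(s-1)k}$ absorbs exactly. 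So the $2^k$ in~\eqref{eq:Y_cond} is only half the story: the other factor $2^{-((s-1)k-m)}$ comes from rewriting $(L/(4e))^{(s-1)k-m}$ as $(L/(2e))^{(s-1)k-m}\cdot 2^{-((s-1)k-m)}$. Your decision to apply~\eqref{eq:Y_cond} only to $m_j\ge 1$ is in principle cleaner (it avoids the implicit $L\ge 4e$), but then the exponent in $(L/(4e))^{t'k-m}$ depends on the number $t'$ of nonzero parts, and you would still need the monotonicity $(L/(4e))^{t'k-m}\le(L/(4e))^{(s-1)k-m}$ (again $L\ge4e$) before the count can be absorbed; as written, the step from ``$2^{s+m}$ count'' to~\eqref{eq:thm_one} does not go through without that extra bookkeeping.
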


The next theorem poses only conditions on the overlap $|e_{1,1}\cap e_{1,2}|$, where we recall that $e_{1,1}$ and $e_{1,2}$ are uniformly chosen random sets from $\supp(g)$. 
\begin{theorem}\label{thm:DMK_two}
Assuming general assumptions (from Definition~\ref{def:general_as}), let $L\geq 2\cdot e$ and $s\geq 2$ additionally hold. If we have 
\begin{equation}\label{eq:codegree_Y}    
    \mathbb{P}\left(Y_2=y\right)\cdot r^{2-\frac{y}{k}}\cdot d^{\frac{y}{k}}\leq L^k
\end{equation} 
for all $y\in [k-1]$ then there exists $G\subseteq 2^X\setminus\{\emptyset\}$ with $\left<g\right>\subseteq \left<G\right>$ and $w\left(G,\frac{p}{L}\right)\leq 1$.
\end{theorem}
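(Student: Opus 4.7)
The plan is to apply the DeMarco--Kahn-style probabilistic method (as in Theorem~\ref{thm:DMK_one}) to the random construction of $G$ from Definition~\ref{def:general_as}, choosing $s\ge 2$ and $t\in\NN$ so that the resulting $G$ satisfies both $\langle g\rangle\subseteq\langle G\rangle$ and $w(G,p/L)\le 1$ with positive probability. First I would fix $s\ge 2$ large enough so that $L^{sk}\ge 4n$ and set $t=\lceil np^{-sk}\rceil$. For the covering part, for every $S\in\langle g\rangle$ the assumption $|\{T\in E:T\subseteq S\}|\ge r$ combined with $|E|=r/p^k$ gives $\mathbb{P}(e_i\subseteq S)\ge p^{sk}$, so $\mathbb{P}(S\notin\langle G\rangle)\le(1-p^{sk})^t\le e^{-n}$; a union bound over $S\in 2^X$ then yields $\mathbb{P}(\langle g\rangle\not\subseteq\langle G\rangle)\le 2^ne^{-n}<1/4$.

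For the weight, writing $|e_1|=sk-Y$ with $Y=\sum_{j=1}^{s}Y_j$, we have
\[
\mathbb{E}[w(G,p/L)]=t\,(p/L)^{sk}\,\mathbb{E}\bigl[(L/p)^{Y}\bigr]\le\frac{2n}{L^{sk}}\,\mathbb{E}\bigl[(L/p)^{Y}\bigr]\le\tfrac{1}{2}\,\mathbb{E}\bigl[(L/p)^{Y}\bigr],
\]
so it suffices to bound $\mathbb{E}\bigl[(L/p)^{Y}\bigr]$ by an absolute constant. Next I would use the subadditivity estimate $Y_j=|e_{1,j}\cap U_{j-1}|\le\sum_{j'<j}|e_{1,j}\cap e_{1,j'}|$, which gives $Y\le\sum_{1\le j<j'\le s}Z_{j,j'}$ with $Z_{j,j'}:=|e_{1,j}\cap e_{1,j'}|$; since $L/p\ge 1$, one obtains $(L/p)^{Y}\le\prod_{j<j'}(L/p)^{Z_{j,j'}}$, and the task reduces to estimating the joint moment $\mathbb{E}\bigl[\prod_{j<j'}(L/p)^{Z_{j,j'}}\bigr]$.

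The main obstacle is precisely this joint moment bound, because the $Z_{j,j'}$ sharing a common index are correlated through a shared edge. I plan to handle the dependencies by iterated conditioning: peel off $e_{1,s}$ and use that, conditional on the earlier edges, the overlaps $|e_{1,s}\cap e_{1,j}|$ for $j<s$ are independent functions of the single fresh edge, and take expectation over $e_{1,s}$. The product thereby reduces to powers of single-pair quantities whose marginal distribution is exactly that of $Y_2$, so~\eqref{eq:codegree_Y} is applicable. The contribution of each overlap value $y\in[k-1]$ is bounded via~\eqref{eq:codegree_Y}, while the boundary values $y=0$ and $y=k$ are handled directly using $\mathbb{P}(Y_2=0)\le 1$ and $\mathbb{P}(Y_2=k)=1/|E|$; the hypotheses $r\ge L^k$ and $r\le d$ serve to absorb the emerging factors. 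Once the joint moment is bounded, Markov's inequality gives $\mathbb{P}(w(G,p/L)>1)\le 1/4$, and together with the coverage bound this shows that both favourable events occur simultaneously with positive probability, yielding the required deterministic $G$.
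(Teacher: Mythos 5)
Your approach differs fundamentally from the paper's and has a gap at the decisive step. The obstacle is exactly where you flag it and then wave away: after the reduction $(L/p)^Y\le\prod_{j<j'}(L/p)^{Z_{j,j'}}$ and peeling off $e_{1,s}$, you face the single conditional expectation $\mathbb{E}_{e_{1,s}}\bigl[(L/p)^{\sum_{j<s}Z_{j,s}}\mid e_{1,1},\ldots,e_{1,s-1}\bigr]$, whose exponent aggregates the overlap of the one fresh edge $e_{1,s}$ against all $s-1$ earlier edges simultaneously. This does not \enquote{reduce to powers of single-pair quantities whose marginal distribution is exactly that of $Y_2$}: it is a single expectation over one random edge, not a product of independent single-pair contributions, and there is no correlation inequality that lets you factor it. If you instead split the product by H\"older, each factor becomes $\mathbb{E}\bigl[(L/p)^{\binom{s}{2}Z_{j,j'}}\bigr]^{1/\binom{s}{2}}$, and hypothesis~\eqref{eq:codegree_Y} gives no control of this: it bounds $\mathbb{P}(Y_2=y)\,p^{-y}r^2$ but says nothing about $\mathbb{P}(Y_2=y)\,p^{-\binom{s}{2}y}$. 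In short, the hypothesis only constrains pairwise overlaps, which is too weak to close the joint-moment estimate your $s$-fold random construction requires.

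The paper sidesteps all of this with a deterministic construction tailored to the hypothesis. It sets $G=G_0\cup G_1$, where $G_0=\{e\cup e'\,:\,e,e'\in E,\ |e\cap e'|\notin\{0,k\}\}$ and $G_1$ is the set of unions of $r$ pairwise disjoint edges of $E$. Any $S\in\lgr$ contains at least $r$ edges of $E$; either some pair of them intersects nontrivially (then $S$ contains a member of $G_0$) or they are pairwise disjoint (then $S$ contains a member of $G_1$), so $\lgr\subseteq\lGr$. For the weight, $w(G_0,p/L)\le\sum_{y=1}^{k-1}d^2\,\mathbb{P}(Y_2=y)\,(p/L)^{2k-y}$, and substituting $p=(r/d)^{1/k}$ each summand is precisely $\mathbb{P}(Y_2=y)\,r^{2-y/k}d^{y/k}/L^{2k-y}\le L^{-(k-y)}$ by~\eqref{eq:codegree_Y}, giving $w(G_0,p/L)\le 1/2$; and $w(G_1,p/L)\le\binom{d}{r}(p/L)^{rk}\le e^r/L^{rk}\le 1/2$ using $r\ge L^k\ge(2e)^k$. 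Thus the hypothesis is used exactly once, on a genuinely pairwise object, which is the only form in which it is actually strong enough.
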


Both these theorems are generalizations of the result of DeMarco and Kahn from~\cite{TalagrandCliques} to the abstract setting of $k$-uniform hypergraphs, whereas~\cite{TalagrandCliques} studies only hypergraphs whose edges correspond to edge sets of cliques in graphs.

In the next section we provide more motivation for Defintion~\ref{def:general_as} and some preliminary results. We prove Theorem~\ref{thm:DMK_one} and Theorem~\ref{thm:DMK_two} in Sections~\ref{sec:DMK_one} and~\ref{sec:DMK_two} respectively. 
Finally, we demonstrate the versatility of Theorems~\ref{thm:DMK_one} and~\ref{thm:DMK_two} by applying them to the situation when the edges of a hypergraph correspond to the edge sets of hypergraph cliques in Section~\ref{sec:cliques_hyp}.

\section{Preliminary results}
In this section we aim to provide some motivation for Definition~\ref{def:general_as} and to also prove some first observations. 
We start with the setup of Conjecture~\ref{conj:Talagrand_rest}. Let $g\colon \binom{X}{k}\to[0,1]$ be given and let $p$ be such that $w(g,p)=1$ holds. We set $E:=\supp(g)$ and $d:=|E|$ and we assume that there exists an $r>0$ such that $g(S)=\frac{1}{r}\cdot \mathds{1}_{S\in E}$, that is, the function $g$ is constant on its support. Furthermore we may assume that $L^k\le r\le d$ holds, as otherwise $\left< g\right>$ is either empty (if $r>d$) or we just set $G:=E$ (if $r<L^k$). We will also assume throughout the paper that $k\le \ln(|X|)$, since the other case is taken care by our result in~\cite[Theorem~10]{FP23}. 

One further advantage to work in uniform regime which we are going to use is that  $w(g,p)=1$ allows to rewrite $p$ as 
$p = \left(\sum_{S\in \binom{X}{k}} g\left(S\right)\right)^{-\frac{1}{k}}$. 
 In the case $g$ is constant on its support further simplifies $p$ to be $p=\left(\frac{r}{d}\right)^{1/k}$. We further set $n:=|X|$ and let $m$ be the number of minimal elements in $\left< g\right>$. 

In the following we will define $G\subseteq 2^X$ as a collection of unions of randomly chosen sets from $E$ and we will provide conditions when such a (random) set $G\subseteq 2^X$  has small weight and satisfies $\left< g\right>\subseteq\left< G\right>$. We pick two values $s$, $t\in \NN$ and, for $i\in[t]$, $j\in [s]$, pick independently and uniformly at random sets $e_{i,j}\in E$ (with replacement). Next we define the (random) sets $e_i:=\bigcup_{j\in[s]} e_{i,j}$ for each $i\in [t]$ and the random family $G:=\left\{e_i \mid i\in[t]\right\}$.

We will see  later that $s\cdot k =\ln(n)$  and $t=p^{-s\cdot k}\cdot n$ will be good choices, and we ignore floor and ceiling signs as these will not affect our calculations.

With the assumptions above we obtain.

\begin{lemma}\label{lem:find_t}
If $t\geq p^{-s\cdot k}\cdot \ln(2m)$, then with probability at least $\frac{1}{2}$ we have $\left<g\right>\subseteq \left<G\right>$. 
In particular this holds for $t\geq p^{-s\cdot k}\cdot n$.
\end{lemma}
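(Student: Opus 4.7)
The plan is to reduce $\langle g\rangle\subseteq\langle G\rangle$ to a covering statement over the minimal elements of $\langle g\rangle$, and then apply a standard first-moment/union-bound argument.

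First I would observe that since $\langle G\rangle$ is an upset (upward closed in $2^X$), it suffices to show that every minimal element $M$ of $\langle g\rangle$ belongs to $\langle G\rangle$; and for that it is enough that some $e_i\subseteq M$ holds. So the task reduces to estimating, for a fixed minimal $M$, the probability that \emph{none} of the random sets $e_1,\dots,e_t$ is contained in $M$.

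Next I would compute the probability that a single uniformly chosen $e_{i,j}\in E$ lies in $M$. Since $g\equiv\tfrac1r$ on $E=\supp(g)$ and $M\in\langle g\rangle$, the defining inequality $\sum_{T\subseteq M}g(T)\ge 1$ gives $|\{S\in E:S\subseteq M\}|\ge r$. Hence
\[
\mathbb{P}\bigl(e_{i,j}\subseteq M\bigr)\ge \frac{r}{d}=p^{k},
\]
using $p=(r/d)^{1/k}$. By independence of the $s$ choices inside a single $i$,
\[
\mathbb{P}\bigl(e_i\subseteq M\bigr)=\mathbb{P}\Bigl(\bigcup_{j\in[s]}e_{i,j}\subseteq M\Bigr)=\prod_{j\in[s]}\mathbb{P}(e_{i,j}\subseteq M)\ge p^{s\cdot k}.
\]
Independence across $i\in[t]$ then yields
\[
\mathbb{P}\bigl(\forall i\in[t]:e_i\not\subseteq M\bigr)\le \bigl(1-p^{s\cdot k}\bigr)^{t}\le\exp\!\bigl(-t\cdot p^{s\cdot k}\bigr)\le \frac{1}{2m}
\]
whenever $t\ge p^{-s\cdot k}\ln(2m)$. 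A union bound over the at most $m$ minimal elements of $\langle g\rangle$ then gives probability at least $\tfrac12$ that each of them contains some $e_i$, i.e.\ $\langle g\rangle\subseteq\langle G\rangle$.

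For the ``in particular'' clause I would just use the trivial bound $m\le 2^{n}$, so $\ln(2m)\le (n+1)\ln 2\le n$ (using $n\ge e^{k}\ge e^{2}$ from the general assumptions, which comfortably gives $(n+1)\ln 2\le n$). Hence $t\ge p^{-s\cdot k}n\ge p^{-s\cdot k}\ln(2m)$ suffices. There is essentially no obstacle here: every step is routine, and the only point requiring even mild care is extracting the lower bound $|\{S\in E:S\subseteq M\}|\ge r$ from the minimality of $M$ in $\langle g\rangle$ together with the constant value $1/r$ of $g$ on its support.
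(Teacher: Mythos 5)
Your proof is correct and follows essentially the same route as the paper: bound $\mathbb{P}(e_{i,j}\subseteq M)\ge r/d=p^k$ from the minimality of $M$ and the constant weight $1/r$, apply independence across the $s$ inner choices and across the $t$ unions, and finish with a union bound over minimal elements. The only cosmetic difference is in the final clause, where you use the trivial bound $m\le 2^n$ (needing $(n+1)\ln 2\le n$, which holds since $n\ge e^2$), whereas the paper invokes Sperner's theorem to get $m\le 2^{n-1}$; both suffice.
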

\begin{proof}[Proof of Lemma~\ref{lem:find_t}]
    We can start by rephrasing the probability:
    \begin{alignat}{100}\label{eq:gG_prob}
        \mathbb{P}\left(\left<g\right>\subseteq \left<G\right>\right) = \mathbb{P}\left(\forall S\in\left<g\right>\exists i\in [t] : e_i\subseteq S\right) = \mathbb{P}\left(\forall S\in\left<g\right>\exists i\in [t] \ \forall j\in [s] : e_{i,j}\subseteq S\right)
    \end{alignat}
Let $S\in\left<g\right>$. Since $g$ equals $\frac{1}{r}$ on its support, we have $1\leq \sum_{T\in\binom{S}{k}} g(T) = \frac{\left|\supp(g)\cap \binom{S}{k}\right|}{r}$. Hence, for any pair $(i,j)\in[t]\times[s]$, we obtain the following lower bound on  $\mathbb{P}\left(e_{i,j}\subseteq S\right)\geq \frac{r}{d} = p^k$ (where we also used $w(g,p)=\frac{d}{r}p^k=1$). Therefore this yields $\mathbb{P}\left(\forall j\in [s] : e_{i,j}\subseteq S\right)\geq p^{s\cdot k}$, and hence $\mathbb{P}\left(\forall i\in[t]\ \exists j\in [s] : e_{i,j}\not\subseteq S\right)\leq \left(1-p^{s\cdot k}\right)^t$. 

A union bound over all minimal $S\in\left<g\right>$ implies (with~\eqref{eq:gG_prob}):
    \begin{alignat*}{100}
        \mathbb{P}\left(\left<g\right>\subseteq \left<G\right>\right) & = \mathbb{P}\left(\forall S\in\left<g\right>\exists i\in [t] \ \forall j\in [s] : e_{i,j}\subseteq S\right)\\
        & \geq 1-m\cdot \left(1-p^{s\cdot k}\right)^t\geq 1-m\cdot \left(e^{-p^{s\cdot k}}\right)^t\\
        & \geq 1-m\cdot e^{-p^{s\cdot k}\cdot t} = 1 - e^{\ln(m)-p^{s\cdot k}\cdot t}.
    \end{alignat*}
For $t\geq p^{-s\cdot k}\cdot \ln(2m)$ this gives $\mathbb{P}\left(\left<g\right>\subseteq \left<G\right>\right)\ge 1-e^{-\ln(2)}=1/2$. Since the number of minimal elements of any set of $2^X$ is an antichain, it follows by Sperner's theorem that $m\le \binom{n}{\lfloor n/2\rfloor }\le 2^{n-1}$ and hence the case when $t\geq p^{-s\cdot k}\cdot n$.
\end{proof}

In the lemma above, $G$ is a random subset of $2^X$, formed by taking $t$ random unions of  $s$ random sets from $\supp(g)$. If, additionally, 
$\mathbb{E}\left[w\left(G,\frac{p}{L}\right)\right]<\frac{1}{2}$, then we can use
\begin{equation}\label{eq:cond_Exp}
\EE\left[w\left(G,\frac{p}{L}\right) \,\big|\, \left<g\right>\subseteq \left<G\right>\right]\le \frac{\EE\left[w\left(G,\frac{p}{L}\right)\right]}{\mathbb{P}(\left<g\right>\subseteq \left<G\right>)}
\end{equation}
to infer $\EE\left[w\left(G,\frac{p}{L}\right) \,\big|\, \left<g\right>\subseteq \left<G\right>\right]\le 1$. This implies the 
existence of  $G\subseteq 2^X$ with $\left<g\right>\subseteq \left<G\right>$ and $w\left(G,\frac{p}{L}\right)\le 1$. 
 Thus, we should be interested in computing $\mathbb{E}\left[w\left(G,\frac{p}{L}\right)\right]$:
\begin{alignat}{100}\label{eq:e_to_Y}   
    \mathbb{E}\left[w\left(G,\frac{p}{L}\right)\right] = \mathbb{E}\left[\sum_{i=1}^t \left(\frac{p}{L}\right)^{\left|e_i\right|}\right] = t\cdot\mathbb{E}\left[\left(\frac{p}{L}\right)^{\left|e_1\right|}\right],
\end{alignat}
which boils down to knowing the distribution of $|e_1|$. In the case $s=1$ it is easy (since each $e_i$ is a randomly chosen set from $\supp(g)$) and yields the following.

\begin{lemma}\label{lem:s_one} 
Let $c>1$. Then, for $L\geq 2c$ and $k\geq \log_c(r) + \log_c\left(\ln(n)\right)$, there exists $G$ with $\left<g\right>\subseteq \left<G\right>$ and $w\left(G,\frac{p}{L}\right)\leq 1$.
\end{lemma}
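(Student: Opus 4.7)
The plan is to bypass the random construction from the preliminary discussion entirely and to take the deterministic choice $G:=E$. The hypothesis on $k$ is essentially designed so that this direct choice already satisfies both conclusions.

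First, I would verify that $\left<g\right>\subseteq \left<E\right>$. Since $g=\tfrac{1}{r}\mathds{1}_E$ and $r\ge 1$, any $S\in\left<g\right>$ satisfies $\left|E\cap\binom{S}{k}\right|\ge r\ge 1$, so $S$ contains at least one element of $E$, giving $S\in\left<E\right>$. Next, using $w(g,p)=(d/r)p^k=1$ (equivalently $dp^k=r$), I would compute
\[
w\left(E,\tfrac{p}{L}\right) = \sum_{S\in E}\left(\tfrac{p}{L}\right)^{|S|} = d\cdot\left(\tfrac{p}{L}\right)^{k} = \frac{dp^k}{L^k} = \frac{r}{L^k}.
\]

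Finally, I would show $L^k\ge r$ from the hypothesis. The assumption $k\ge\log_c(r)+\log_c(\ln n)$ is equivalent to $c^k\ge r\ln n$, and combined with $L\ge 2c$ this gives $L^k\ge(2c)^k=2^k c^k\ge 2^k r\ln n\ge r$ (using $2^k\ln n\ge 2\ln 2>1$ for $n\ge 2$ and $k\ge 1$). Hence $w(E,p/L)=r/L^k\le 1$, so $G:=E$ satisfies the conclusion.

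The main ``obstacle'' is really just the recognition that the hypothesis on $k$ is strong enough to make the random construction unnecessary in this regime. Had the hypothesis been weaker, one would instead apply the random construction with $s=1$ and invoke Lemma~\ref{lem:find_t} together with~\eqref{eq:cond_Exp}, computing $\mathbb{E}[w(G,p/L)]=t\cdot(p/L)^k$ from~\eqref{eq:e_to_Y}; however, because $|e_i|=k$ is deterministic when $s=1$, this probabilistic approach gives no real improvement over $G:=E$ in the current setting. The lemma thus serves as a sanity check on the $s=1$ case and motivates the study of larger $s$ (as in Theorems~\ref{thm:DMK_one} and~\ref{thm:DMK_two}), where the unions $e_i=\bigcup_{j\in[s]} e_{i,j}$ are genuinely larger than $k$-sets and the random construction becomes effective.
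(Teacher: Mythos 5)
Your proof is correct, but it takes a genuinely different and in fact simpler route than the paper's. The paper applies the random construction with $s=1$ (via Lemma~\ref{lem:find_t} and \eqref{eq:cond_Exp}), choosing $t=p^{-k}\ln(2m)$ where $m\le\binom{d}{r}$ is the number of minimal sets of $\langle g\rangle$; it then bounds $\ln(2m)\le kr\ln(n)$ and gets $\EE[w(G,p/L)]\le kr\ln(n)/L^k\le 1/2$, which together with the conditioning inequality finishes the proof. You instead take $G:=E$ deterministically, compute $w(E,p/L)=r/L^k$ exactly, and observe that the hypothesis $c^k\ge r\ln n$ with $L\ge 2c$ forces $L^k\ge r$. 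This deterministic choice is exactly what the paper uses in its preliminary discussion to justify assuming $r\ge L^k$ (``otherwise we just set $G:=E$''), so your argument is not new machinery but rather the recognition that the hypotheses of Lemma~\ref{lem:s_one} already place you inside that trivial regime. Your bound $r/L^k$ is sharper than the paper's $kr\ln(n)/L^k$ by a factor of $k\ln(n)$, though both suffice here. Your concluding remarks are also sound: with $s=1$ the union $e_1$ is a single $k$-set, so the randomization buys nothing, and the lemma's real role in the paper is to license the standing assumption $k<\ln(r)+\ln(\ln n)$ for the rest of the argument. The only hypothesis you implicitly rely on that you should flag is $r\ge 1$ (so that $\left|E\cap\binom{S}{k}\right|\ge r\ge 1$ gives a contained edge); this is fine since $L\ge 2c>2$ forces $r\ge L^k>1$ in the paper's preliminary setup, but it is worth stating.
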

\begin{proof}
The number of minimal sets of $\supp(g)$ is at most $\binom{d}{r}$, since each set $S\in\left<g\right>$ has to contain at least $r$ many $k$-sets from $\supp(g)$ (indeed, we have $\sum_{T\in\binom{S}{k}} g(S)=\sum_{T\in\binom{S}{k}\cap \supp(g)}\frac{1}{r} \ge1$).

We set $s=1$. Then Lemma~\ref{lem:find_t}, applied with $t= p^{-s\cdot k}\cdot \ln(2m)$, yields $\PP\left[\left<g\right>\subseteq \left<G\right>\right]\ge 1/2$.  
Since  $2m\leq 2\binom{d}{r}\leq  d^r\leq \binom{n}{k}^r\leq n^{k\cdot r}$, we compute
    \begin{alignat*}{100}
        \mathbb{E}\left[w\left(G,\frac{p}{L}\right)\right]=t\cdot\mathbb{E}\left[\left(\frac{p}{L}\right)^{\left|e_1\right|}\right] = p^{-k}\cdot \ln(2m) \cdot \left(\frac{p}{L}\right)^{k} \le \frac{\ln(n^{k\cdot r})}{L^k}\leq \frac{k\cdot r\cdot \ln(n)}{(2c)^k}\leq \frac{k}{2^k}\cdot \frac{r\cdot \ln(n)}{c^k}\leq \frac{1}{2}.
    \end{alignat*}
The claim follows with~\eqref{eq:cond_Exp}, which was explained above.    
\end{proof}

From now on we always assume that $c=e$, $L\ge 2e$ and $k<\ln(r)+\ln\left(\ln(n)\right)$, and for simplicity, we always take  $t=p^{-s\cdot k}\cdot n$ (for some given $s\in\NN$). Given a (random) $G$, we define for each $j\in[s]$ the random variable $Y_j:=\left|e_{1,j}\cap \bigcup_{j'<j} e_{1,j'}\right|$ and observe that $|e_1|=s\cdot k - \sum_{j\leq s} Y_j$ holds. The random variables $Y_j$ control how the union of random $k$-element sets from $\supp(g)$ evolves ($Y_j$ are precisely the size overlaps with the union of the previously chosen $j-1$ $k$-element sets). Since each $e_i\in G$ has the same distribution, we concentrate on $e_1$. We can compute for $t=p^{-s\cdot k}\cdot n$ as in~\eqref{eq:e_to_Y}:
\begin{alignat*}{100}
        \mathbb{E}\left[w\left(G,\frac{p}{L}\right)\right] =t\cdot\mathbb{E}\left[\left(\frac{p}{L}\right)^{\left|e_1\right|}\right] = p^{-s\cdot k}\cdot n\cdot \mathbb{E}\left[\left(\frac{p}{L}\right)^{s\cdot k-\sum_{j\leq s} Y_j}\right] = \frac{n}{L^{s\cdot k}}\cdot \mathbb{E}\left[\left(\frac{p}{L}\right)^{-\sum_{j\leq s} Y_j}\right]
\end{alignat*}
The choice of $s\cdot k=\ln(n)$ takes care of $n$ above and we are left to analyze the distribution of $\sum_{j\leq s} Y_j$ or at least to provide sufficient conditions on $(Y_j)_{j\in[s]}$. The latter is done in the next two sections.

\section{Proof of Theorem~\ref{thm:DMK_one}}\label{sec:DMK_one}
\begin{proof}[Proof of Theorem~\ref{thm:DMK_one}]
We aim to show that, with $t=p^{-s\cdot k}\cdot n$ and the assumptions of the theorem, we get $\mathbb{E}\left[w\left(G,\frac{p}{L}\right)\mid \left<g\right>\subseteq \left<G\right>\right]\leq 1$, which would imply the existence of $G\subseteq 2^X\setminus\{\emptyset\}$ with the required properties $\left<g\right>\subseteq \left<G\right>$ and $w\left(G,\frac{p}{L}\right)\leq 1$. 

Lemma~\ref{lem:find_t} asserts $\mathbb{P}(\left<g\right>\subseteq \left<G\right>)\ge 1/2$. We use~\eqref{eq:cond_Exp} and~\eqref{eq:e_to_Y} to estimate $\mathbb{E}\left[w\left(G,\frac{p}{L}\right)\mid \left<g\right>\subseteq \left<G\right>\right]$ as follows:
    \begin{alignat*}{100}
        \mathbb{E}\left[w\left(G,\frac{p}{L}\right)\mid \left<g\right>\subseteq \left<G\right>\right] & \leq \frac{\mathbb{E}\left[w\left(G,\frac{p}{L}\right)\right]}{\mathbb{P}\left(\left<g\right>\subseteq \left<G\right>\right)}\leq 2\cdot \mathbb{E}\left[w\left(G,\frac{p}{L}\right)\right] = 2\cdot\mathbb{E}\left[\sum_{i=1}^t \left(\frac{p}{L}\right)^{\left|e_i\right|}\right] \\
        & = 2t\cdot\mathbb{E}\left[\left(\frac{p}{L}\right)^{\left|e_1\right|}\right] = 2\cdot p^{-s\cdot k}\cdot n\cdot \mathbb{E}\left[\left(\frac{p}{L}\right)^{s\cdot k-\sum_{j\leq s} Y_j}\right] \\
        & = 2\cdot\frac{n}{L^{s\cdot k}}\cdot \mathbb{E}\left[\left(\frac{p}{L}\right)^{-\sum_{j\leq s} Y_j}\right]\\
        & \overset{s\cdot k\geq \ln(n)}{\leq} 2\cdot\left(\frac{e}{L}\right)^{s\cdot k}\cdot \sum_{m=0}^{(s-1)\cdot k}\mathbb{P}\left(\sum_{j\leq s} Y_j= m\right)\cdot\left(\frac{p}{L}\right)^{-m}.
    \end{alignat*}        
Next we use $p=\left(\frac{r}{d}\right)^{1/k}$, $k\ge 2$, $L\ge 2\cdot e$ and the assumption~\eqref{eq:thm_one} to further simplify the last quantity as follows:
\begin{alignat*}{100}
    2\cdot\left(\frac{e}{L}\right)^{s\cdot k}\cdot \sum_{m=0}^{(s-1)\cdot k}\mathbb{P}\left(\sum_{j\leq s} Y_j =  m\right)\cdot\left(\frac{p}{L}\right)^{-m} = & \frac{2\cdot e^k}{L^k}\cdot\sum_{m=0}^{(s-1)\cdot k} \left(\frac{e}{L}\right)^{(s-1)\cdot k-m}\cdot \mathbb{P}\left(\sum_{j\leq s} Y_j= m\right)\cdot \left(\frac{r}{d\cdot e^k}\right)^{-\frac{m}{k}}\\
    \overset{L\geq 2e}{\leq} & \frac{1}{2} \sum_{m=0}^{(s-1)\cdot k} \left(\frac{e}{L}\right)^{(s-1)\cdot k -m}\cdot \mathbb{P}\left(\sum_{j\leq s} Y_j= m\right)\cdot \left(\frac{d\cdot e^k}{r}\right)^{\frac{m}{k}}\\
    \overset{\eqref{eq:thm_one}}{\le} & \frac{1}{2}\sum_{m=0}^{(s-1)\cdot k} \left(\frac{e}{L}\right)^{(s-1)\cdot k-m}\cdot \left(\frac{L}{2e}\right)^{(s-1)\cdot k -m}\\
    = & \frac{1}{2}\sum_{m=0}^{(s-1)\cdot k} 2^{m-(s-1)\cdot k} \leq 1.
\end{alignat*}
This proves the first part of Theorem~\ref{thm:DMK_one}.

For the second part we need to verify that~\eqref{eq:Y_cond} implies~\eqref{eq:thm_one}.  We first observe that 
\[
\mathbb{P}\left(Y_j = y_j\mid Y_1,\ldots,Y_{j-1}\right)\leq\mathbb{P}\left(Y_s \geq y_j\mid Y_1,\ldots,Y_{s-1}\right)
\]
 holds (by the definition of $Y_j$'s and since each $e_{1,j}$ has the same distribution as $e_{1,s}$).
Also, by the definition, we have $Y_1=0$. Now we estimate
    \begin{alignat*}{100}
        \mathbb{P}\left(\sum_{j\leq s} Y_j= m\right)\cdot \left(\frac{d\cdot e^k}{r}\right)^{\frac{m}{k}} & = \sum_{(y_j)_{2\leq j\leq s}\in \NN_0^{s-1}\colon\sum_j y_j=m} \mathbb{P}\left(Y_2=y_2,\ldots,Y_s=y_s\right)\cdot \left(\frac{d\cdot e^k}{r}\right)^{\frac{m}{k}}\\
        & = \sum_{(y_j)_{2\leq j\leq s}\colon\sum_j y_j=m}\left( \prod_{j=2}^s \mathbb{P}\left(Y_j=y_j \mid Y_2=y_2,\ldots Y_{j-1}=y_{j-1}\right) \right)\cdot \left(\frac{d\cdot e^k}{r}\right)^{\frac{m}{k}}\\
        & = \sum_{(y_j)_{2\leq j\leq s}\colon\sum_j y_j=m} \prod_{j=2}^s 
        \left(\mathbb{P}\left(Y_j=y_j \mid Y_2=y_2,\ldots Y_{j-1}=y_{j-1}\right)\cdot \left(\frac{d\cdot e^k}{r}\right)^{\frac{y_j}{k}}\right)\\
        & \leq \sum_{(y_j)_{2\leq j\leq s}\colon\sum_j y_j=m} \prod_{j=2}^s \mathbb{P}\left(Y_s\geq y_j \mid Y_2=y_2 \ldots, Y_{s-1}=y_{s-1}\right)\cdot \left(\frac{d\cdot 2^k\cdot e^k}{r}\right)^{\frac{y_j}{k}}\cdot \frac{1}{2^{y_j}}\\
        & \overset{\eqref{eq:Y_cond}}{\leq} \sum_{(y_j)_{2\leq j\leq s}\colon\sum_j y_j=m} \prod_{j=2}^s \left(\frac{L}{4e}\right)^{k-y_j}\cdot \frac{1}{2^{y_j}} = \sum_{(y_j)_{2\le j\le s}:\sum_j y_j=m} \left(\frac{L}{4e}\right)^{(s-1)\cdot k-m}\cdot \frac{1}{2^{m}}\\
        & = \sum_{(y_j)_{2\leq j\leq s}\colon\sum_j y_j=m} \left(\frac{L}{2e}\right)^{(s-1)\cdot k-m}\cdot \frac{1}{2^{(s-1)\cdot k}}\\
        & \leq 2^{(s-1)\cdot k}\cdot \left(\frac{L}{2e}\right)^{(s-1)\cdot k-m}\cdot \frac{1}{2^{(s-1)\cdot k}} = \left(\frac{L}{2e}\right)^{(s-1)\cdot k-m},
    \end{alignat*}
which implies that condition~\eqref{eq:thm_one} holds and thus finishes the second claim.
\end{proof}

\section{Proof of Theorem~\ref{thm:DMK_two}}\label{sec:DMK_two}
\begin{proof}[Proof of Theorem~\ref{thm:DMK_two}]
Recall that we set $E:=\supp(g)$.   This time we construct $G$ as $G_0\cup G_1$ by letting $G_0$ be the set of all unions of two different not disjoint $e$, $e'\in E$ and $G_1$ be the set of all unions of $r$ pairwise disjoint $\left(e_i\right)_{i\in [r]}\in E^r$ (we may assume that $r\in\NN$). Formally we define: 
    \begin{alignat*}{100}
        G_0 & := \left\{e\cup e' \mid e,e'\in E: |e\cap e'|\notin \{0,k\}\right\},\\
        G_1 & := \left\{\cup_{i\in [r]} e_i \,\big|\, \left(e_i\right)_{i\in [r]} \in E^{r}: \forall i\neq j: e_i\cap e_j =\emptyset\right\}.
    \end{alignat*}
    This covers all of $\left<g\right>$ since every $S\in \left<g\right>$ contains at least $r$ different $e\in E$ which are either disjoint (in which case $S\in G_1$) or have a pair of edges from $E$ that is not disjoint (in which case $S\in G_0$). Therefore we are left with calculating $w\left(G,\frac{p}{L}\right)$ as follows: 
    \begin{alignat*}{100}
        w\left(G,\frac{p}{L}\right) & \leq w\left(G_0,\frac{p}{L}\right) +w\left(G_1,\frac{p}{L}\right)
    \end{alignat*}
    Next we bound each term separately. First, we consider $w\left(G_0,\frac{p}{L}\right)$ (we use  $p=\left(\frac{r}{d}\right)^{1/k}$ and $d=|E|$):
    \begin{alignat*}{100}  
        w\left(G_0,\frac{p}{L}\right) & \leq \sum_{e,e'\in E: |e\cap e'|\notin \{0,k\}} \left(\frac{p}{L}\right)^{2k-\left|e\cap e'\right|}
       = \sum_{y=1}^{k-1} d^2\cdot\mathbb{P}\left(Y_2=y\right)\cdot \left(\frac{\left(\frac{r}{d}\right)^{\frac{1}{k}}}{L}\right)^{2k-y}\\
        & \leq \sum_{y=1}^{k-1} \frac{1}{L^{k-y}}\cdot \mathbb{P}\left(Y_2=y\right)\cdot \frac{r^{2-\frac{y}{k}}\cdot d^{\frac{y}{k}}}{L^k} \overset{\eqref{eq:codegree_Y}}{\leq} \sum_{y=1}^{k-1} \frac{1}{L^{k-y}} \leq \frac{1}{2}.
    \end{alignat*}
Second, we turn to $w\left(G_1,\frac{p}{L}\right)$ (and again we use  $p=\left(\frac{r}{d}\right)^{1/k}$):
    \begin{alignat*}{100}
        w\left(G_1,\frac{p}{L}\right) & \leq \sum_{\left(e_i\right)_{i\in [r]} \in E^{r}\colon \forall i\neq j\colon e_i\cap e_j =\emptyset} \left(\frac{p}{L}\right)^{r\cdot k} \leq \binom{d}{r}\left(\frac{p}{L}\right)^{r\cdot k} \leq \left(\frac{e\cdot d}{r}\right)^r\cdot \left(\frac{\left(\frac{r}{d}\right)^{\frac{1}{k}}}{L}\right)^{r\cdot k} = \frac{e^r}{L^{r\cdot k}} \leq \frac{1}{2}.
    \end{alignat*}
    Putting both estimates together yields:
    \begin{alignat*}{100}
        w\left(G,\frac{p}{L}\right) & \leq w\left(G_0,\frac{p}{L}\right) +w\left(G_1,\frac{p}{L}\right) \leq \frac{1}{2} + \frac{1}{2} \leq 1
    \end{alignat*}
\end{proof}

\section{Cliques in hypergraphs}\label{sec:cliques_hyp}
As an application of Theorems~\ref{thm:DMK_one} and~\ref{thm:DMK_two}, we generalize the result of DeMarco and Kahn~\cite{TalagrandCliques} to cliques in hypergraphs. Like in Definition \ref{def:general_as}, we collect our assumptions in the following definition.
\begin{definition}[\emph{Assumptions on clique hypergraphs}]\label{def:clique_as}
Let $\tn$, $\tk$, $\ell\in\NN$ with $\tn>\tk>\ell\ge 2$ and $\binom{\tk}{\ell}\le \ln\binom{\tn}{\ell}$. Set $X:=\binom{[\tn]}{\ell}$ and $n:=|X|=\binom{\tn}{\ell}$. Set $E:=\left\{\binom{T}{\ell}\,\bigg|\, T\in\binom{[\tn]}{\tk}\right\}$ and thus $|E|=\binom{\tn}{\tk}$ and we may view the tuple $(X,E)$ as a $k$-uniform hypergraph with $k=\binom{\tk}{\ell}$. Let $L\ge 1$ and $r\ge 1 $ with  $L^k\le r\le \binom{\tn}{\tk}$. Let $g\colon \binom{X}{k}\to[0,1]$ be defined through $g(S):=\frac{1}{r}\cdot\mathds{1}_{E}(S)$  and let $p\in[0,1]$ be such that $w(g,p)=1$. 

Then $G$ is constructed randomly by choosing $s$, $t\in \NN$ and, for $i\in[t]$, $j\in [s]$, picking independently and uniformly at random sets $e_{i,j}\in E$ (with replacement). With this we define the sets $e_i:=\bigcup_{j\in[s]} e_{i,j}$ for each $i\in [t]$ and get the (random) set $G:=\left\{e_i \mid i\in[t]\right\}$. Additionally we define for each $j\in[s]$ the random variable $Y_j:=\left|e_{1,j}\cap \bigcup_{j'<j} e_{1,j'}\right|$. The sets $e_{1,1}$, \ldots, $e_{1,s}\in E$, correspond to sets $T_1$, \ldots, $T_s\in \binom{[\tn]}{\tk}$ chosen uniformly at random (where each  $e_{1,i}$ is a clique spanned on $T_i$ in $K^{(\ell)}_{\tn}$). For each $j\in[s]$, we define  the random variables $\tY_1$,\ldots, $\tY_s$, where $\tY_j:=\left|T_{j}\cap \bigcup_{j'<j} T_{j'}\right|$. 
\end{definition}

For a nonnegative integer $y$, let $\ty$ be the smallest integer with $\binom{\ty}{\ell}\geq y$. 
By definitions of $(Y_j)_{j\in[s]}$ and $(\tY_j)_{j\in[s]}$ we immediately obtain
\begin{equation}\label{eq:Y_tY}
\mathbb{P}\left(Y_s \geq y\mid Y_1,\ldots,Y_{s-1}\right)\leq \mathbb{P}\left(\tY_s\geq \ty\mid Y_1,\ldots,Y_{s-1}\right).
\end{equation}
 
\begin{lemma}\label{lem:DMK_one}
Given assumptions on clique hypergraphs (from Definition~\ref{def:clique_as}), the following holds. 
\begin{enumerate}
    \item For $\ell\le 4\ln(\tn)$, $r\ge 2^{3k}\cdot e^{3k}\cdot\ln^{2\tk}(\tn)$ and $L\geq 4\cdot e$, Theorem~\ref{thm:DMK_one} is applicable with $s\cdot k=\ln(n)$.
    \item For $\tk=\ell+1$, $r\ge \left(\ln(e\tn/\ell)\right)^\ell\cdot \tn \cdot  e^{3(\ell+1)}$ and $L\geq 4\cdot e$, Theorem~\ref{thm:DMK_one} is applicable with  $s\cdot k=\ln(n)$.
\end{enumerate}
\end{lemma}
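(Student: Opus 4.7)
The plan is to invoke Theorem~\ref{thm:DMK_one}, so the whole task reduces to verifying the sufficient overlap condition~\eqref{eq:Y_cond} under each of the two hypotheses. The key reduction comes for free from the observation~\eqref{eq:Y_tY}, which lets me replace the hypergraph-overlap $Y_s$ by the much simpler vertex-overlap $\tY_s$ of $\tk$-sets in $[\tn]$. So I never have to count $\ell$-subsets directly; it is enough to analyse a uniform random $\tk$-subset of $[\tn]$.

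The first step is to prove, by a standard union bound, the estimate
\[
\mathbb{P}\!\left(\tY_s \geq \ty \,\big|\, Y_1,\ldots,Y_{s-1}\right) \;\leq\; \binom{(s-1)\tk}{\ty}\left(\frac{\tk}{\tn}\right)^{\ty}.
\]
This follows because, conditionally on $T_1,\ldots,T_{s-1}$, the union $U=\bigcup_{j'<s}T_{j'}$ has size at most $(s-1)\tk$, and for each fixed $\ty$-subset of $U$ the probability that $T_s$ contains it is $\binom{\tn-\ty}{\tk-\ty}/\binom{\tn}{\tk}\leq(\tk/\tn)^{\ty}$. I will then simplify this using $(s-1)\tk \leq sk \cdot (\tk/k) = \ln(n)\cdot (\tk/k)\leq \ell\ln(e\tn/\ell)$ (in both parts), and $\binom{(s-1)\tk}{\ty}\leq (e(s-1)\tk/\ty)^{\ty}$.

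The second step is to substitute this bound into the left-hand side of~\eqref{eq:Y_cond}, combine it with $d=\binom{\tn}{\tk}\leq (e\tn/\tk)^{\tk}$ and the stated lower bound on $r$, and verify the resulting inequality for every $y\in[k]$. For part (1), the hypothesis $r\geq 2^{3k}e^{3k}\ln^{2\tk}(\tn)$ is precisely what makes $(d\cdot 2^k e^k/r)^{y/k}$ tiny, while the $(\tk/\tn)^{\ty}$ factor absorbs the combinatorial cost $\binom{(s-1)\tk}{\ty}$; the side condition $\ell\leq 4\ln\tn$ is used to bound $\ln(e\tn/\ell)$ by a constant multiple of $\ln\tn$ so that the $\ln^{2\tk}(\tn)$ in the denominator of $d/r$ suffices. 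To turn the exponents from $\ty$ into $y/k$ I will use the crude relation $\binom{\ty}{\ell}\geq y$, which gives $\ty\geq c\cdot \ell \cdot y^{1/\ell}$. For part (2), where $k=\tk=\ell+1$, the random variable $Y_s$ can only take the values $0$, $1$, or $k$: indeed $|T_s\cap T_{j'}|\leq \ell-1$ contributes $0$ to $Y_s$, $|T_s\cap T_{j'}|=\ell$ contributes exactly one common $\ell$-subset, and $|T_s\cap T_{j'}|=\ell+1$ forces $T_s=T_{j'}$. So the only nontrivial case of~\eqref{eq:Y_cond} is $y=1$, corresponding to $\ty=\ell$; the stated lower bound $r\geq (\ln(e\tn/\ell))^{\ell}\tn\cdot e^{3(\ell+1)}$ is tailored so that the single-case inequality goes through.

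I expect the main obstacle to be the bookkeeping rather than any conceptual difficulty: the three factors in~\eqref{eq:Y_cond} carry different exponents ($\ty$, $y/k$ and $k-y$), and one has to check that the inequality holds uniformly in $y$. I plan to handle this by separating the left-hand side into a product whose $\ty$-th root is bounded by something of the form $(e\tk^3\ln\tn)/(\ty\tn)$ times $(d/r)^{1/k}$-style factors, and by checking that even in the worst case $y=k$ (so $k-y=0$ and the right-hand side is $1$) the bound survives thanks to the margin built into the lower bounds on $r$. Once this routine verification is complete, Theorem~\ref{thm:DMK_one} applies and produces the required $G$.
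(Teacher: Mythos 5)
Your overall strategy is the same as the paper's: reduce to the vertex-overlap random variable $\tY_s$ via~\eqref{eq:Y_tY}, bound its upper tail by a hypergeometric/union-bound estimate (your $\binom{|U|}{\ty}(\tk/\tn)^{\ty}$ is, after $\binom{a}{b}\le (ea/b)^b$, the same quantity $\left(\frac{e\,\tm\,\tk}{\ty\,\tn}\right)^{\ty}$ that the paper obtains), and feed the result into condition~\eqref{eq:Y_cond} of Theorem~\ref{thm:DMK_one}. However, there are two concrete gaps in how you close the argument.

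First, your device for the exponent mismatch does not work as stated. You write that $\binom{\ty}{\ell}\ge y$ gives $\ty\ge c\ell\, y^{1/\ell}$ and that this will ``turn the exponents from $\ty$ into $y/k$.'' What you actually need is the inequality $y/k\le \ty/\tk$, so that $(d\,2^ke^k/r)^{y/k}\le (d\,2^ke^k/r)^{\ty/\tk}$ (legitimate since $r\le d$ makes the base $\ge 1$), after which the whole left-hand side of~\eqref{eq:Y_cond} becomes the $\ty$-th power of a single expression and the lower bound on $r$ kills it. But $\ty\ge c\ell\,y^{1/\ell}$ does not imply $y/k\le\ty/\tk$: e.g.\ at $y=1$, $\ty=\ell$, you would need $k\ge e\tk/\ell$, which fails already for $\tk=\ell+1$ with $\ell$ moderately large. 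The inequality $y/k\le\ty/\tk$ does hold, but it follows from a different elementary fact, namely $\binom{\ty}{\ell}/\binom{\tk}{\ell}=\prod_{i=0}^{\ell-1}\frac{\ty-i}{\tk-i}\le\frac{\ty}{\tk}$, combined with $y\le\binom{\ty}{\ell}$; this is the step the paper uses and which your cited relation does not provide.

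Second, in part (2) your claim that $Y_s$ only takes the values $0$, $1$, or $k$ is incorrect: since $Y_s=\bigl|e_{1,s}\cap\bigcup_{j'<s}e_{1,j'}\bigr|$ is the overlap with the \emph{union} of all previous cliques and $s=\ln(n)/k\ge 3$ in this regime, $T_s$ can share distinct $\ell$-subsets with different $T_{j'}$, so $Y_s$ can equal $2,3,\dots,k$. Consequently ``the only nontrivial case is $y=1$'' is false. For $\tk=\ell+1$ the correct observation is that $\ty\in\{\ell,\ell+1\}$: $y=1$ gives $\ty=\ell$, while every $y\in\{2,\dots,k\}$ gives $\ty=\tk$. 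Both must be checked; the paper's bound $\mathbb{P}(\tY_s\ge\ty\mid\cdots)\le (\tm)_\ell(\ell+1)/(\tn)_\ell$ covers both, and one then takes the worst case $\binom{\ty}{\ell}/(\ell+1)=1$ for the exponent on $\binom{\tn}{\ell+1}2^{\ell+1}e^{\ell+1}/r$. Without the $\ty=\tk$ case your verification of~\eqref{eq:Y_cond} is incomplete.
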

\begin{proof}[Proof of Lemma~\ref{lem:DMK_one}]
Since we have $r\le \binom{\tn}{\tk}$ and $\binom{\ty}{\ell}\geq y$, we estimate
    \begin{alignat}{100}\label{eq:prob_DMK_one}
        \mathbb{P}\left(Y_s \geq y\mid Y_1,\ldots,Y_{s-1}\right)\left(\frac{\binom{\tn}{\tk}\cdot 2^k\cdot e^k}{r}\right)^{\frac{y}{k}} \overset{\eqref{eq:Y_tY}}{\le} \mathbb{P}\left(\tY_s\geq \ty\mid Y_1,\ldots,Y_{s-1}\right)\left(\frac{\binom{\tn}{\tk}\cdot 2^{k}\cdot e^{k}}{r}\right)^{\frac{\binom{\ty}{\ell}}{\binom{\tk}{\ell}}}.
    \end{alignat}
    Since the cliques (associated with $Y_i$s) are chosen uniformly (and independently) we can just say that each clique draws $\tk$ vertices (without replacement) independently of the others. If we now assume the worst case which is that all the previous cliques are disjoint, they still at most span $s\cdot \tk\le s\cdot k= \ln(n)=\ln\binom{\tn}{\ell}\leq \ell\cdot \ln(\tn) \leq 4\ln^2(\tn)$ many vertices. 
Let $T_1$, \ldots, $T_{s}$ be randomly chosen $\tk$-element sets from $[\tn]$. We set $\tm:=\left|\cup_{i=1}^{s-1} T_i\right|$. The random variable $\tY_s=\left|T_s\cap\left(\cup_{i=1}^{s-1} T_i\right)\right|$ has hypergeometric distribution. Therefore we get (we write $(x)_u$ for the falling factorial $\prod_{i=0}^{u-1} (x-i)$):
    \begin{alignat}{100}\label{eq:tYs_ge_ty}
        \mathbb{P}\left(\tY_s\geq \ty\mid Y_1,\ldots,Y_{s-1}\right) & \leq \sum_{j= \ty}^{\tk} \frac{\binom{\tm}{j}\binom{\tn-\tm}{\tk-j}}{\binom{\tn}{\tk}} = 
        \sum_{j= \ty}^{\tk} \frac{(\tm)_{\ty} (\tk)_{\ty}\binom{\tm-\ty}{j-\ty}\binom{\tn-\tm}{\tk-j}}{(\tn)_{\ty}(j)_{\ty}\binom{\tn-\ty}{\tk-\ty}}\\ \nonumber
        & \le \frac{(\tm)_{\ty}}{(\tn)_{\ty} }\binom{\tk}{\ty}\sum_{j=0}^{\tk-\ty} \frac{\binom{\tm-\ty}{j}\binom{\tn-\tm}{(\tk-\ty)-j}}{\binom{\tn-\ty}{\tk-\ty}}\\ \nonumber
        & \leq \left(\frac{\tm}{\tn}\right)^{\ty} \left(\frac{e\cdot \tk}{\ty}\right)^{\ty}\leq \left(\frac{4\cdot e\cdot\tk \cdot \ln^2(\tn)}{\tn}\right)^{\ty},
    \end{alignat}
 where we used $\tm\le 4 \ln^2(\tn)$ in the last inequality.   
 We thus further bound the right hand side of~\eqref{eq:prob_DMK_one} by
    \begin{alignat*}{100}
\left(\frac{4\cdot e\cdot\tk \cdot \ln^2(\tn)}{\tn}\right)^{\ty} \left(\frac{\binom{\tn}{\tk}\cdot 2^{k}\cdot e^{k}}{r}\right)^{\frac{\binom{\ty}{\ell}}{\binom{\tk}{\ell}}}
& \le \left(\frac{4\cdot e\cdot\tk \cdot \ln^2(\tn)}{\tn}\right)^{\ty}\left(\frac{\tn^{\tk}\cdot e^{\tk}\cdot 2^{k}\cdot e^{k}}{\tk^{\tk}\cdot r}\right)^{\frac{\ty}{\tk}} \\
&=\left(\frac{2^{2\tk}\cdot e^{\tk}\cdot \tk^{\tk}\cdot \ln^{2\tk}(\tn)\cdot \tn^{\tk}\cdot e^{\tk}\cdot 2^{k}\cdot e^{k}}{\tn^{\tk}\cdot \tk^{\tk}\cdot r}\right)^{\frac{\ty}{\tk}}
        \le \left(\frac{e^{2\tk}\cdot \ln^{2\tk}(\tn)\cdot 2^{3k}\cdot e^{k}}{r}\right)^{\frac{\ty}{\tk}}\\
        & \leq 1 \leq \left(\frac{L}{4e}\right)^{k-y},
    \end{alignat*}
where in the last two steps we used $L\geq 4e$, $r\geq 2^{3k}\cdot e^{3k}\cdot \ln^{2\tk}(\tn)$.   Thus, Theorem~\ref{thm:DMK_one} is applicable in this case.

Let now $\tk=\ell+1$ and therefore $k=\binom{\tk}{\ell}=\tk=\ell+1$. Then~\eqref{eq:tYs_ge_ty} can be simplified, since $\ty\in\{\ell,\ell+1\}$, as follows.
    \begin{alignat*}{100}
        \mathbb{P}\left(\tY_s\geq \ty\mid Y_1,\ldots,Y_{s-1}\right) & \leq \sum_{j= \ell}^{\ell+1} \frac{\binom{\tm}{j}\binom{\tn-\tm}{\tk-j}}{\binom{\tn}{\tk}} = 
        \frac{\binom{\tm}{\ell}(\tn-\tm)}{\binom{\tn}{\ell+1}}+        \frac{\binom{\tm}{\ell+1}}{\binom{\tn}{\ell+1}}\\ 
        & \le \frac{(\tm)_\ell (\tn-\tm)(\ell+1)}{(\tn)_{\ell+1}}+  \frac{(\tm)_{\ell+1}}{(\tn)_{\ell+1}} \le \frac{(\tm)_\ell (\ell+1)}{(\tn)_{\ell}}.
    \end{alignat*}
We now use the following estimate on $\tm$: $\tm\le s\cdot \tk\le s\cdot k= \ln(n)\le \ln\left(\left(\frac{e\tn}{\ell}\right)^\ell\right)= \ell\cdot \ln\left(e\tn/\ell\right)$. We  continue the estimate from~\eqref{eq:prob_DMK_one} as follows.
\begin{alignat*}{100}
        \mathbb{P}\left(Y_s \geq y\mid Y_1,\ldots,Y_{s-1}\right)\left(\frac{\binom{\tn}{\tk}\cdot 2^k\cdot e^k}{r}\right)^{\frac{y}{k}}  
        \overset{\eqref{eq:Y_tY}}{\le}  \frac{(\tm)_\ell\cdot (\ell+1)}{(\tn)_{\ell}} \cdot \left(\frac{\binom{\tn}{\ell+1}\cdot 2^{\ell+1}\cdot e^{\ell+1}}{r}\right)^{\frac{\binom{\ty}{\ell}}{\ell+1}}\\
          \overset{r\le \binom{\tn}{\tk}}{\le} \frac{(\tm)_\ell \cdot (\ell+1)}{(\tn)_{\ell}} \cdot \left(\frac{\binom{\tn}{\ell+1}\cdot 2^{\ell+1}\cdot e^{\ell+1}}{r}\right)
         \le \frac{(\tm)_\ell\cdot (\ell+1)}{(\tn)_{\ell}} \cdot \left(\frac{\tn^{\ell+1}\cdot e^{\ell+1} \cdot 2^{\ell+1}\cdot e^{\ell+1}}{ (\ell+1)^{\ell+1}\cdot r}\right)\\
         \le \frac{\tm^{\ell}}{\tn^\ell}\cdot \frac{\tn^{\ell+1} \cdot e^{2(\ell+1)} \cdot 2^{\ell+1}}{(\ell+1)^{\ell}\cdot r}\le 
         \frac{\ell^\ell \cdot\left(\ln(e\tn/\ell)\right)^\ell \cdot \tn \cdot  e^{2(\ell+1)} \cdot 2^{\ell+1}}{(\ell+1)^{\ell}\cdot r}\le \frac{\left(\ln(e\tn/\ell)\right)^\ell\cdot \tn \cdot  e^{3(\ell+1)} }{r } \le 1. 
\end{alignat*}
Again, since $L\geq 4e$, we have $\left(\frac{L}{4e}\right)^{k-y}\ge 1$ and Theorem~\ref{thm:DMK_one} is also applicable in this case.

\end{proof}

\begin{lemma}\label{lem:DMK_two}
Given assumptions on clique hypergraphs (from Definition~\ref{def:clique_as}),  the following holds.  For 
\[
r\leq \sqrt{\frac{L^k\cdot \tn^{\ell-1}}{e^{2\tk}\cdot \tk^{\ell-1}}}
\]
 and $L\geq 2\cdot e$, Theorem~\ref{thm:DMK_two} is applicable.
\end{lemma}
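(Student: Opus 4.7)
My plan is to invoke Theorem~\ref{thm:DMK_two} by verifying the codegree condition~\eqref{eq:codegree_Y} for every $y\in [k-1]$. The crucial observation is that, in the clique setting, $Y_2$ is completely determined by the vertex-overlap: since $e_{1,1}$ and $e_{1,2}$ are the $\ell$-edge sets of cliques on independently chosen uniform $\tk$-subsets $T_1,T_2\subseteq [\tn]$, one has $Y_2 = |\binom{T_1}{\ell}\cap \binom{T_2}{\ell}| = \binom{\tY_2}{\ell}$. In particular $\mathbb{P}(Y_2=y)=0$ unless $y=\binom{j}{\ell}$ for some $j\in\{\ell,\ldots,\tk-1\}$, and for each such $y$ the analogue of~\eqref{eq:Y_tY} gives $\mathbb{P}(Y_2=y)\le \mathbb{P}(\tY_2\ge \ty)$, where $\ty$ is the smallest integer with $\binom{\ty}{\ell}\ge y$.

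First, I bound the hypergeometric tail. By a standard double-counting argument (first choose which $\ty$ elements of $T_1$ lie in $T_2$, then fill the remaining $\tk-\ty$ vertices of $T_2$ arbitrarily), combined with the identity $\binom{\tn}{\tk}\binom{\tk}{\ty}=\binom{\tn}{\ty}\binom{\tn-\ty}{\tk-\ty}$, one obtains
\[
\mathbb{P}(\tY_2\ge \ty)\ \le\ \frac{\binom{\tk}{\ty}^2}{\binom{\tn}{\ty}}.
\]
Using $\binom{\tk}{\ty}\le (e\tk/\ty)^{\ty}$ and $\binom{\tn}{\ty}\ge (\tn/\ty)^{\ty}$ this simplifies to $\mathbb{P}(Y_2=y)\le e^{2\ty}\tk^{2\ty}/(\ty^{\ty}\tn^{\ty})$.

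Second, I plug this into~\eqref{eq:codegree_Y}, rewrite its left-hand side as $\mathbb{P}(Y_2=y)\cdot r^{2}\cdot (d/r)^{y/k}$, and bound $d=\binom{\tn}{\tk}\le (e\tn/\tk)^{\tk}$. The key algebraic identity is that the exponent $y/k$ is exactly $(\ty)_\ell/(\tk)_\ell$, and hence $\tk\cdot y/k=(\ty)_\ell/(\tk-1)_{\ell-1}$, which converts the factor $(e\tn/\tk)^{\tk\cdot y/k}$ into something controlled by $\tn^{\ell-1}/\tk^{\ell-1}$ after collecting terms. The worst case is $\ty=\tk-1$ (i.e.\ $y=\binom{\tk-1}{\ell}$), and applying the hypothesis $r^{2}\le L^{k}\tn^{\ell-1}/(e^{2\tk}\tk^{\ell-1})$ together with $L\ge 2e$ absorbs the remaining constants and closes the bound $\mathbb{P}(Y_2=y)\cdot r^{2-y/k}\cdot d^{y/k}\le L^{k}$.

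The main obstacle is bookkeeping the exponents. The exponent $y/k=\binom{\ty}{\ell}/\binom{\tk}{\ell}$ is \emph{not} equal to $\ty/\tk$, so I have to work with the ratio of falling factorials $(\ty)_\ell/(\tk)_\ell$ and in particular identify the cancellation $(\tk)_\ell/(\tk-1)_{\ell-1}=\tk$ that produces the $\tk^{\ell-1}$ in the denominator of the hypothesis. Once this is set up, the $e^{2\ty}$ arising from the hypergeometric bound has to be matched against the $(e\tn/\tk)^{\tk}$ coming from $d^{y/k}$, which accounts for the $e^{2\tk}$ factor in the statement. The monotonicity in $\ty$ (which reduces the verification to the single value $\ty=\tk-1$, with smaller $\ty$ giving slack via the $(L/2e)^{...}$-type tail of the geometric series, analogously to the sum-over-$y$ in Theorem~\ref{thm:DMK_two}'s proof) is what makes the one-parameter hypothesis on $r$ sufficient.
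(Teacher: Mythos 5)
Your overall route is the paper's route: you exploit that two clique hypergraph edges overlap in a clique, so $Y_2 = \binom{\tY_2}{\ell}$, you express $\mathbb{P}(Y_2=y)$ via the hypergeometric overlap of $T_1$ and $T_2$, you bound $d=\binom{\tn}{\tk}\le(e\tn/\tk)^{\tk}$, and you identify the same key exponent $\tk\cdot y/k=(\ty)_\ell/(\tk-1)_{\ell-1}$ governing the power of $\tn/\tk$. The paper works with the exact $\mathbb{P}(Y_2=y)=\binom{\tk}{\ty}\binom{\tn-\tk}{\tk-\ty}/\binom{\tn}{\tk}$ and bounds it by $\binom{\tk}{\ty}(\tk/\tn)^{\ty}$; your $\binom{\tk}{\ty}^2/\binom{\tn}{\ty}$ is a comparable (slightly lossier, by roughly $\binom{\tk}{\ty}$) substitute, which would force a worse power of $e$ in the stated upper bound on $r$, but that is a constants issue, not a structural one.

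The genuine gap is in the reduction to $\ty=\tk-1$. You assert ``monotonicity in $\ty$'' and invoke a ``$(L/2e)^{\dots}$-type tail of the geometric series''. Neither applies here: condition~\eqref{eq:codegree_Y} is a pointwise requirement for every $y\in[k-1]$, not a sum, so there is no geometric tail that compensates for other values of $y$; and the relevant exponent function $f(\ty)=\ty-\tk\cdot\binom{\ty}{\ell}/\binom{\tk}{\ell}$ is \emph{not} monotone on $\{\ell,\dots,\tk-1\}$. Its discrete increment is $1-\ell\,(\ty)_{\ell-1}/(\tk-1)_{\ell-1}$, which is positive near $\ty=\ell$ and negative near $\ty=\tk-1$ (for example $\ell=2$, $\tk=5$: $f$ rises then falls). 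What the paper actually uses is that $f$ is \emph{concave} on $[\ell,\tk-1]$, so its minimum is attained at an endpoint, and then checks both endpoints, finding $f(\tk-1)=\ell-1\le f(\ell)=\ell-\tk/\binom{\tk}{\ell}$. Without the concavity step (or some replacement argument) you cannot conclude that $\ty=\tk-1$ is the worst case, so as written your argument does not establish the bound for intermediate $\ty$. You also should check $\ty=\ell$ explicitly to identify the minimum of $f$.
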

\begin{proof}[Proof of Lemma~\ref{lem:DMK_two}]
    This time we only have to calculate the probability for the overlap of two randomly chosen hypergraph cliques each on $\tk$ vertices. Since two distinct, not disjoint cliques intersect in a smaller clique, we write $y=\binom{\ty}{\ell}$ for the number of edges in their intersection (for some integer $\ty$ with $\tk-1\ge \ty\ge \ell$). It follows
    \begin{alignat*}{100}
        \mathbb{P}\left(Y_2=y\right)\cdot r^{2-\frac{y}{k}}\cdot d^{\frac{y}{k}} & = \frac{\binom{\tk}{\ty}\binom{\tn-\tk}{\tk-\ty}}{\binom{\tn}{\tk}}\cdot r^{2-\frac{y}{k}}\cdot \binom{\tn}{\tk}^{\frac{\binom{\ty}{\ell}}{\binom{\tk}{\ell}}} \le \binom{\tk}{\ty}\cdot \left(\frac{\tk}{\tn}\right)^{\ty}\cdot r^2\cdot \left(\frac{e\cdot \tn}{\tk}\right)^{\tk\cdot \frac{\binom{\ty}{\ell}}{\binom{\tk}{\ell}}}\\
        & \leq e^{2\tk}\cdot r^2\cdot \left(\frac{\tk}{\tn}\right)^{\ty-\tk\cdot \frac{\binom{\ty}{\ell}}{\binom{\tk}{\ell}}},
    \end{alignat*}
where the last bound is maximized for $\ty=\ell$ or $\ty=\tk-1$, since $f(\ty):=\ty-\tk\cdot \frac{\binom{\ty}{\ell}}{\binom{\tk}{\ell}}$ is concave for $\ty\ge \ell$. 
 We have $f(\ell)=\ell-\tk\cdot\frac{\binom{\ell}{\ell}}{\binom{\tk}{\ell}} = \ell-\tk\cdot\frac{1}{\binom{\tk}{\ell}} \ge \ell - 1$    and 
 $f(\tk-1)=(\tk-1)-\tk\cdot\frac{\binom{\tk-1}{\ell}}{\binom{\tk}{\ell}} = (\tk-1)-(\tk-\ell) = \ell - 1$.
 We thus obtain with $r\leq \sqrt{\frac{L^k\cdot \tn^{\ell-1}}{e^{2\tk}\cdot \tk^{\ell-1}}}$
    \begin{alignat*}{100}
        \mathbb{P}\left(Y_2=y\right)\cdot r^{2-\frac{y}{k}}\cdot d^{\frac{y}{k}} & \leq e^{2\tk}\cdot r^2\cdot \left(\frac{\tk}{\tn}\right)^{\ty-\tk\cdot \frac{\binom{\ty}{\ell}}{\binom{\tk}{\ell}}} \leq e^{2\tk}\cdot r^2\cdot \left(\frac{\tk}{\tn}\right)^{\ell-1}\leq L^k,
    \end{alignat*}
    which finishes the proof.
\end{proof}

\begin{corollary}\label{cor:hypergraphs}
Given assumptions on clique hypergraphs (from Definition~\ref{def:clique_as}),  the following holds for $L\geq 2^{12}\cdot e^{16}$ and $\tn$ large enough. There exists $G\subseteq 2^X\setminus\{\emptyset\}$ so that $\left<g\right>\subseteq \left<G\right>$ and $w(G,p/L)\le 1$.
\end{corollary}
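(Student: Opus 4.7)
The approach is to dichotomize on the value of $r$: apply Lemma~\ref{lem:DMK_two} when $r$ is small and Lemma~\ref{lem:DMK_one}(1) when $r$ is large. Both tools are available since $L\ge 2^{12}e^{16}$ comfortably exceeds the thresholds $L\ge 2e$ and $L\ge 4e$ required by them; moreover, treating $\tk$ and $\ell$ as fixed parameters, the auxiliary hypothesis $\ell\le 4\ln(\tn)$ used in Lemma~\ref{lem:DMK_one}(1) is automatic once $\tn$ is large enough.

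Set $R_1:=2^{3k}e^{3k}\ln^{2\tk}(\tn)$ (the lower threshold from Lemma~\ref{lem:DMK_one}(1)) and $R_2:=\sqrt{L^k\,\tn^{\ell-1}/(e^{2\tk}\,\tk^{\ell-1})}$ (the upper threshold from Lemma~\ref{lem:DMK_two}). The plan is to verify the covering inequality $R_1\le R_2$. Once this is known, every admissible $r\in[L^k,\binom{\tn}{\tk}]$ lies either in $[L^k,R_2]$, so that Lemma~\ref{lem:DMK_two} supplies the desired $G$, or in $(R_2,\binom{\tn}{\tk}]\subseteq[R_1,\infty)$, so that Lemma~\ref{lem:DMK_one}(1) does.

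To establish $R_1\le R_2$, I would square both sides, substitute $L^k=2^{12k}e^{16k}$, and rearrange to the equivalent inequality
\[
2^{6k}\,e^{10k-2\tk}\,\tn^{\ell-1}\;\ge\;\tk^{\ell-1}\,\ln^{4\tk}(\tn).
\]
Since $k=\binom{\tk}{\ell}\ge\tk$ whenever $\tk>\ell\ge 2$, the exponent $10k-2\tk$ is nonnegative, and it suffices to check $\tn^{\ell-1}\ge\tk^{\ell-1}\ln^{4\tk}(\tn)$. With $\tk,\ell$ fixed and $\ell\ge 2$, the left-hand side grows at least linearly in $\tn$ while the right-hand side is polylogarithmic, so the bound holds for all $\tn$ above some $\tn_0=\tn_0(\tk,\ell)$. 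A parallel (and simpler) check ensures $L^k\le R_2$ for $\tn$ large, so that the interval $[L^k,R_2]$ on which Lemma~\ref{lem:DMK_two} applies is nonempty.

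The main conceptual hurdle is simply identifying the correct dichotomy and verifying that no ``middle'' range of $r$ is left uncovered; once reduced to the polynomial-versus-polylogarithmic comparison above, the remainder is routine bookkeeping. Note that there is substantial slack in the argument (a factor of order $\tn^{(\ell-1)/2}$ on one side against fixed constants and $\ln^{2\tk}(\tn)$ on the other), so the specific constant $L=2^{12}e^{16}$ is far from tight for the asymptotic conclusion — it is chosen to make the algebraic substitutions above transparent.
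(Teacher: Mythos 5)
There is a genuine gap. You treat $\tk$ and $\ell$ as fixed and declare that the hypothesis $\ell\le 4\ln(\tn)$ in Lemma~\ref{lem:DMK_one}(1) ``is automatic once $\tn$ is large enough.'' But Definition~\ref{def:clique_as} imposes no a priori upper bound on $\ell$ other than $\binom{\tk}{\ell}\le\ln\binom{\tn}{\ell}$, and the Corollary has to hold for \emph{every} admissible triple $(\tn,\tk,\ell)$ once $\tn$ is large, not just with $\tk,\ell$ held fixed. Configurations with $\ell>4\ln(\tn)$ are perfectly admissible: for example $\tk=\ell+1$ with $\ell$ as large as a constant fraction of $\tn$ satisfies $k=\binom{\tk}{\ell}=\ell+1\le\ell\ln(\tn/\ell)\le\ln\binom{\tn}{\ell}$ while $\ell\gg 4\ln(\tn)$. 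In that regime Lemma~\ref{lem:DMK_one}(1) is simply unavailable, so your dichotomy ``$r\le R_2$ or $r\ge R_1$'' does not cover the range of $r$.

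This is precisely why the paper's proof first splits on whether $\ell\le 4\ln(\tn)$ and, in the complementary case $\ell>4\ln(\tn)$, argues that necessarily $\tk=\ell+1$ (otherwise $k\ge\binom{\tk}{\ell}\ge\tk^2/4\ge\ell^2/4$ would contradict $k\le\ell\ln(\tn)$), and then invokes the second part of Lemma~\ref{lem:DMK_one}, with its threshold $r\ge\left(\ln(e\tn/\ell)\right)^\ell\,\tn\,e^{3(\ell+1)}$, comparing \emph{that} against $R_2$ via~\eqref{eq:eq:cor_bound_sc}. Your write-up omits this branch entirely. (A secondary, less essential difference: even within $\ell\le 4\ln(\tn)$, the paper performs a further sub-case split on $\ln^{4\tk}(\tn)\lessgtr\tn^{(\ell-1)/2}$ so as to absorb $\ln^{4\tk}(\tn)$ into $L^{k/2}$ via $k\ge\tk^2/4$; you instead drop all the exponential slack and reduce to $\tn^{\ell-1}\ge\tk^{\ell-1}\ln^{4\tk}(\tn)$, which is only obviously valid if $\tk$ and $\ell$ are held fixed — again the same over-restriction.)

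To repair your argument, you would need to add the case $\ell>4\ln(\tn)$, show that it forces $\tk=\ell+1$, and then verify that the lower bound on $r$ from Lemma~\ref{lem:DMK_one}(2) is at most the upper bound $R_2$ from Lemma~\ref{lem:DMK_two} — which is exactly what the paper does with the inequality~\eqref{eq:eq:cor_bound_sc}.
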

\begin{proof}[Proof of Corollary~\ref{cor:hypergraphs}]
The claim follows by comparing the bounds on $r$ in Lemmas~\ref{lem:DMK_one} and~\ref{lem:DMK_two} and making sure that they cover all possible ranges of $r$.

We assume that $\ell\le 4\ln(\tn)$. By squaring both sides we see that $ 2^{3k}\cdot e^{3k}\cdot\ln^{2\tk}(\tn) \leq \sqrt{\frac{L^k\cdot \tn^{\ell-1}}{e^{2\tk}\cdot \tk^{\ell-1}}}$ is equivalent to 
\begin{equation}\label{eq:cor_bound}
\frac{2^{6k}\cdot e^{6k}\cdot e^{2\tk}}{L^k}\cdot \frac{\ln^{4\tk}(\tn)\cdot \tk^{{\ell-1}}}{\tn^{{\ell-1}}} \leq 1.
\end{equation}

If we have $\ln^{4\tk}(\tn)\le \tn^{(\ell-1)/2}$ then by taking the logarithm we get $\tk\le \frac{\ell-1}{8}\frac{\ln (\tn)}{\ln\ln(\tn)}\le \ln^2(\tn)\le \tn^{1/2}$ for $\tn$ large enough. Hence, inequality~\eqref{eq:cor_bound} is implied if 
$\frac{2^{6k}\cdot e^{6k}\cdot e^{2\tk}}{L^k}\le 1$, which yields the following sufficient condition for $L$: $L\ge 2^6\cdot e^8$.

If on the other hand $\ln^{4\tk}(\tn)\ge \tn^{(\ell-1)/2}$ then we have $\tk\ge \frac{\ell-1}{8}\frac{\ln (\tn)}{\ln\ln(\tn)}$. 
This implies $\ell\le \tk-2$ for $\tn$ large enough. 
With  $k=\binom{\tk}{\ell}\ge \frac{\tk^2}{4}$, 
$\tk\le k\le\ln(n)\le \ell\ln(\tn)$ and $L\ge e$ we bound 
\[
\frac{\ln^{4\tk}(\tn)}{L^{k/2}}\le \frac{\exp\left(4\ell\cdot\ln(\tn)\cdot\ln\ln(\tn)\right)}{\exp(\tk^2/8)}\le \frac{\exp\left(4\ell\cdot\ln(\tn)\cdot\ln\ln(\tn)\right)}{\exp\left(\frac{(\ell-1)^2}{2^9}\frac{\ln^2 (\tn)}{(\ln\ln(\tn))^2}\right)} \le 1
\]
for $\tn$ large enough. The inequality~\eqref{eq:cor_bound} now holds if $\frac{2^{6k}\cdot e^{6k}\cdot e^{2\tk}}{L^{k/2}}\le 1$, which yields the following sufficient condition for $L$: $L\ge 2^{12}\cdot e^{16}$.

Next we assume $\ell> 4\ln(\tn)$. The only case to consider here is $\tk=\ell+1$ and hence $k=\ell+1$. Indeed, if $\tk\ge \ell+2$ then this would yield  with  $k=\binom{\tk}{\ell}\ge \frac{\tk^2}{4}\ge \frac{\ell^2}{4}$ and $k\le \ln(n)\le \ell\ln(\tn)$ that $\ell\le 4\ln(\tn)$.  Thus, in the following we assume $\tk=k=\ell+1$ and use the second lower bound on $r$ from Lemma~\ref{lem:DMK_one}. Then
\[
\left(\ln(e\tn/\ell)\right)^\ell \cdot \tn \cdot  e^{3(\ell+1)} \le \sqrt{\frac{L^k\cdot \tn^{\ell-1}}{e^{2\tk}\cdot \tk^{\ell-1}}}
\]
is equivalent to 
\begin{equation}\label{eq:eq:cor_bound_sc}
\frac{\left(\ln(e\tn/\ell)\right)^{2\ell} \cdot  e^{8(\ell+1)}\cdot (\ell+1)^{\ell-1}}{L^{\ell+1}\cdot \tn^{\ell-3}} \le 1.
\end{equation}
Rewriting~\eqref{eq:eq:cor_bound_sc} as
\[
\frac{\left(\ln(e\tn/\ell)\right)^{2\ell}}{(e\tn/\ell)^\ell}\cdot\frac{(\ell+1)^{\ell-1}}{\ell^{\ell}}\cdot\frac{\tn^3}{e^{\ell+1}}\cdot\frac{1}{e\cdot (L/e^{10})^{\ell+1}}\le 1,
\]
we see that it suffices to take $L\ge e^{10}$ for the inequality~\eqref{eq:eq:cor_bound_sc} to hold.

Thus, we have shown Corollary  for $L\geq 2^{12}\cdot e^{16}$ and $\tn$ large enough.
\end{proof}


\begin{thebibliography}{8}
\newcommand{\enquotenew}[1]{`#1'}
\providecommand{\natexlab}[1]{#1}
\providecommand{\url}[1]{\texttt{#1}}
\providecommand{\urlprefix}{URL }
\providecommand{\doi}[1]{\textsc{doi}:
  \href{https://doi.org/#1}{\nolinkurl{#1}}}
\providecommand*{\eprint}[2][]{arXiv:
  \href{https://arxiv.org/abs/#2}{\nolinkurl{#2}}}

\bibitem[{DeMarco and Kahn(2015)}]{TalagrandCliques}
B.~DeMarco and J.~Kahn, {Note on a problem of {M}.\ {Talagrand}}. \emph{Random
  Struct. Algorithms} 47.4 (2015),  663--668, \doi{10.1002/rsa.20559}.

\bibitem[{Dubroff, Kahn and Park(2024)}]{DKP24}
Q.~Dubroff, J.~Kahn and J.~Park, {Note on a conjecture of {T}alagrand:
  expectation thresholds vs. fractional expectation thresholds}  (2024).
  ArXiv:2412.00917.

\bibitem[{Fischer and Person(2023)}]{FP23}
T.~Fischer and Y.~Person, {Some results on fractional vs. expectation
  thresholds}  (2023). Submitted, arXiv:2311.08163.

\bibitem[{Frankston, Kahn and Park(2022)}]{TalagrandTwoSets}
K.~Frankston, J.~Kahn and J.~Park, {On a problem of M.\ Talagrand}.
  \emph{Random Struct. Algorithms} 61.4 (2022),  710--723,
  \doi{https://doi.org/10.1002/rsa.21077}.

\bibitem[{Kahn and Kalai(2007)}]{KK07}
J.~Kahn and G.~Kalai, {Thresholds and expectation thresholds}. \emph{Combin.
  Probab. Comput.} 16.3 (2007),  495--502.

\bibitem[{Park and Pham(2024)}]{PP24}
J.~Park and H.~T. Pham, {A proof of the {K}ahn-{K}alai conjecture}.
  \emph{Journal of the American Mathematical Society} 37.1 (2024),  235--243.

\bibitem[{Pham(2024)}]{Pham24}
H.~T. Pham, {A sharp version of {T}alagrand's selector process conjecture and
  an application to rounding fractional covers}  (2024). ArXiv:2412.03540.

\bibitem[{Talagrand(2010)}]{TalagrandOriginal}
M.~Talagrand, {Are many small sets explicitly small?} \emph{Proceedings of the
  42nd annual ACM symposium on theory of computing, STOC '10. Cambridge, MA,
  USA, June 5--8, 2010.},  13--36, New York, NY: Association for Computing
  Machinery (ACM), ISBN 978-1-60558-817-9 (2010),
  \doi{10.1145/1806689.1806693}.

\end{thebibliography}
\end{document}